\newcommand{\PP}{\ensuremath{\mathbb{P}}}
\newcommand{\bdy}{\ensuremath{\partial}}
\newcommand{\iso}{\ensuremath{\cong}}
\newcommand{\Z}[1][]{\ensuremath{\mathbb{Z}_{#1}}}
\newcommand{\Q}{\ensuremath{\mathbb{Q}}}
\newcommand{\R}{\ensuremath{\mathbb{R}}}
\newcommand{\N}{\ensuremath{\mathbb{N}}}
\newcommand{\F}{\ensuremath{\mathbb{F}}}
\newcommand{\gpp}[2]{\ensuremath{\langle #1\,|\, #2\rangle}}
\newtheorem{theorem}{Theorem}[section]
\newtheorem{prop}[theorem]{Proposition}
\newtheorem{conj}[theorem]{Conjecture}
\newtheorem{lem}[theorem]{Lemma}
\newtheorem{clly}[theorem]{Corollary}
\theoremstyle{definition}
\newtheorem{defn}[theorem]{Definition}
\theoremstyle{remark}
\newtheorem{rmk}[theorem]{Remark}
\theoremstyle{plain}
\newcounter{introthmcount}
\newtheorem{introthm}{Theorem}
\theoremstyle{definition}
\newcounter{dawidcomments}
\newcounter{garethcomments}
\newcommand{\rv}[2][l]{\ensuremath{\mathcal{#2}_{#1}}}
\newcommand{\ttx}{ \texttt{$\textup{x}$}}
\newcommand{\tty}{ \texttt{$\textup{y}$}}
\newcommand{\ttz}{ \texttt{$\textup{z}$}}
\newcommand{\Fox}[2]{\ensuremath{\frac{\bdy r_{#1}}{\bdy \ttx_{#2}}}}
\newcommand{\FoxB}[2]{\ensuremath{{\bdy r_{#1}}/{\bdy \ttx_{#2}}}}
\newcommand{\Nov}{\ensuremath{\widehat{\Q G}^\phi}}
\newcommand{\Gr}{\ensuremath{{\rm Gr}_{m-1,n}}}
\newcommand{\cC}{\ensuremath{{\cal C}}}
\def\s-{\smallsetminus}
\DeclareMathOperator{\supp}{\mathrm{supp}}
\title{$\ell^2$-Betti numbers and coherence of random groups}
\author{Dawid Kielak, Robert Kropholler, and Gareth Wilkes}
\begin{document}
\maketitle
\begin{abstract}
We study $\ell^2$-Betti numbers, coherence, and (virtual) fibring of random groups in the few-relator model. In particular, random groups with negative Euler characteristic are coherent, have $\ell^2$-homology concentrated in dimension 1, and embed in a virtually free-by-cyclic group with high probability. In the case of Euler characteristic zero, we use Novikov homology to show that a random group is free-by-cyclic with positive probability.
\end{abstract}
\section{Introduction}
A frequent theme in mathematics is the study of `generic' objects---submanifolds in general position, equations without multiple roots, etc. In the field of group theory the concept of genericity is expressed in the terminology of {\em random groups}---generally speaking, by exploring which properties of a finitely presented group with relators of length $l$ become overwhelmingly prevalent as $l$ grows.

Several generic properties of 1-relator groups are known: in \cite{SS10}, Sapir and \v{S}pakulov\'a proved that a random 1-relator group with at least 3 generators embeds in an ascending HNN extension of a free group, and hence is residually finite \cite{BS05} and coherent \cite{FH99}. Earlier, Dunfield and Thurston \cite{DT06} had shown that the same is true for 2-generator 1-relator groups with positive probability.

Since then, ground-breaking results in the study of special groups by Wise \cite{Wise04}, Agol \cite{Agol13} and others have greatly increased our understanding of small-cancellation groups, which include random groups with few relators. In particular, residual finiteness is now known to hold for all random few relator groups.

In this paper we aim to push coherence results beyond groups with one relator to random groups of positive deficiency---that is, with more generators than relators. We both incorporate notions and techniques from the Dunfield--Thurston and Sapir--\v{S}pakulov\'a papers and use the powerful consequences of specialness. A key additional element in the strategy is the use of Novikov and $\ell^2$-homology, particularly in relation to a virtual fibring condition established by the first author in \cite{Kielak19}.

The distinction between groups with two generators and more than two generators seen above persists in this paper as a distinction between groups of deficiency 1 and those with deficiency greater than 1.

\begin{introthm}\label{thm_deficiency1}
For each $n\geqslant 2$, a random group with $n$ generators and deficiency 1 is a free-by-cyclic group
with positive asymptotic probability.
\end{introthm}


One may also consider a version of Theorem \ref{thm_deficiency1} in which the conclusion `free-by-cyclic' is replaced by `virtually free-by-cyclic'. This version is conjectured to hold with high probability, but our methods only yield a positive asymptotic probability. Theorem \ref{thm_deficiency1} as stated will be true with probability strictly less than 1 (see \cite{DT06} for this phenomenon with 2 generators).

We note that in the two-generator one-relator case, the `virtual' theorem is readily seen to hold with asymptotic probability 1, as noted in Corollary \ref{clly_2Gen1Rel} below. 
\begin{introthm}\label{thm_deficiencymore}
For each $n\geqslant 3$, a random group $G$ with $n$ generators and deficiency $d>1$ has the following properties:
\begin{itemize}
\item $b^{(2)}_1(G)=d-1$, and $b^{(2)}_i(G)=0$ for all $i\neq 1$;
\item $G$ embeds in a virtually free-by-cyclic group; and
\item $G$ is coherent;
\end{itemize}
with asymptotic probability 1.
\end{introthm}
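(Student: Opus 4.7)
The plan is to establish the $\ell^2$-Betti number computation first; the two remaining bullets will then follow from \Cref{thm_ConsequenceOfL2}. As a preliminary step I would invoke standard generic properties of few-relator random groups: with asymptotic probability $1$, a random tuple of $m = n - d$ cyclically reduced relators of length $\ell$ satisfies $C'(\lambda)$ small cancellation for any fixed $\lambda > 0$ as $\ell \to \infty$. Consequently the presentation $2$-complex $X$ is aspherical, $G$ is infinite and hyperbolic, and by the results of Wise and Agol is virtually compact special. In particular $G$ lies in Linnell's class and is virtually RFRS, which will be needed both for the Atiyah-conjecture-based rank computations and to invoke the virtual fibring machinery at the end.

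Asphericity and two-dimensionality of $X$ give $b^{(2)}_i(G) = 0$ for $i \geq 3$, while infiniteness of $G$ forces $b^{(2)}_0(G) = 0$. Combining with $\chi(G) = 1 - n + m = 1 - d$ yields
\[
b^{(2)}_1(G) - b^{(2)}_2(G) = d - 1,
\]
so it suffices to prove $b^{(2)}_2(G) = 0$ with asymptotic probability $1$.

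The heart of the argument is this vanishing. The degree-$2$ $\ell^2$-boundary of $X$ is the operator $\ell^2(G)^m \to \ell^2(G)^n$ given by the Fox Jacobian $J = (\partial r_i / \partial x_j) \in M_{m \times n}(\Z G)$, and via Linnell's theorem $b^{(2)}_2(G)$ equals the corank of $J$ as a matrix over the skew field $\mathcal{D}_G$ associated to $G$. Since $m < n$, it is at least possible for $J$ to have full row rank, and the strategy is to reduce to the much coarser claim that the integer exponent-sum matrix $\epsilon(J) \in \Z^{m \times n}$ has rank $m$. The rows of $\epsilon(J)$ are sums of $\ell$ i.i.d.\ mean-zero increments on $\Z^n$, and a central limit style argument in the spirit of Dunfield--Thurston shows $\epsilon(J)$ has rank $m$ with probability tending to $1$. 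To promote this rank statement to one over $\mathcal{D}_G$ I would pass through the Novikov ring $\Nov$ of a primitive character $\phi \in H^1(G;\Z)$ in sufficiently general position, using the interpretation of $\ell^2$-Betti numbers in terms of Novikov rank from \cite{Kielak19} together with the observation that a matrix over $\Z G$ is invertible over $\Nov$ as soon as a well-chosen square submatrix has a unit leading term with respect to $\phi$.

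The main obstacle will be precisely this last lifting step: one has to show that generic small-cancellation relators not only have their abelianised exponent vectors in general linear position but also have Fox derivatives yielding invertibility in the appropriate Novikov completion, and to ensure a suitable $\phi$ exists generically. Once $b^{(2)}_2(G) = 0$ is secured, virtual RFRS-ness combined with Kielak's theorem \cite{Kielak19} delivers virtual fibring of a finite-index subgroup, and \Cref{thm_ConsequenceOfL2} then packages this into the embedding of $G$ into a virtually free-by-cyclic group and the coherence of $G$.
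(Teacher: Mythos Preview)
Your plan for computing the $\ell^2$ Betti numbers is in the right spirit and close to the paper's: the paper also reduces to showing $b^{(2)}_2(G)=0$ via the Euler characteristic, and does so by finding a character $\phi$ for which the Fox Jacobian becomes ``upper-triangular with unit leading terms'' over the Novikov ring. The paper's concrete incarnation of your ``unit leading term'' heuristic is the \emph{minimum condition} (Definition~\ref{def_MinimumCondition}), and the probabilistic input showing such a $\phi$ exists with high probability (Proposition~\ref{prop_HighDeficiencyProbability}) is not merely a rank statement about the abelianised matrix $\epsilon(J)$: one needs each relator to have a unique $\phi$-minimal vertex of a prescribed type, and this is obtained by a commutator-insertion counting argument together with a Brownian-motion polytope lemma (Lemma~\ref{lem_ManyCorners}). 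You correctly flag this lifting as the main obstacle, but note that full rank of $\epsilon(J)$ alone does not give the Novikov invertibility you need.

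The genuine gap is in your last paragraph. When $d>1$ you have $b^{(2)}_1(G)=d-1>0$, so Kielak's virtual fibring theorem \cite{Kielak19} does \emph{not} apply to $G$: that theorem requires vanishing of the first $\ell^2$ Betti number. Likewise, Theorem~\ref{thm_ConsequenceOfL2} is stated only for groups with a $C'(1/6)$ presentation of deficiency~$1$; its proof uses precisely that $b^{(2)}_1=0$ to invoke \cite[Theorem~5.4]{Kielak19}. So neither of the two tools you cite can be applied to $G$ itself, and indeed $G$ is \emph{not} virtually free-by-cyclic in this regime (a free-by-cyclic group has $\chi=0$). The paper obtains the second and third bullet points by a separate construction (Theorem~\ref{thm_SmallCancEmbedding}): it builds an explicit embedding of $G$ into a $C'(1/6)$ group $H$ with $m$ relators on $m+1$ generators that still satisfies a minimum condition, applies Theorems~\ref{thm_MinCondImpliesL2} and~\ref{thm_ConsequenceOfL2} to $H$ to conclude that $H$ is virtually free-by-cyclic, and then inherits coherence of $G$ from $H$. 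Your proposal is missing this embedding step entirely.
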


Another way to phrase the dependence on deficiency is to consider the Euler characteristic. Theorem \ref{thm_deficiency1} and Theorem \ref{thm_deficiencymore} are the cases of Euler characteristic $\chi =0$ and $\chi < 0 $ respectively. The connection between Euler characteristic and coherence has been postulated elsewhere; for example, in a survey of coherence written by Wise \cite{Wise20}. Our work provides a positive answer to \cite[Conjecture 17.15(3)]{Wise20} with high probability and to \cite[Conjectures 17.14(2) and 17.15(2)]{Wise20} with positive asymptotic probability.

The structure of the paper is as follows. In \cref{sec_Prelims} we will review the formal definitions and background material required for the paper. In Section \ref{sec_MinimumConditions} we will introduce the key condition on a few-relator group on which all the other results rely, and show the consequence of these results regarding $\ell^2$-Betti numbers and fibring. Section \ref{sec_EmbeddingSCGroups} gives the construction used to embed the higher deficiency groups into groups of deficiency one. Finally in Section \ref{sec_Probability} we will build on the results of \cite{SS10} to show that our conditions hold with the required probability.
\linebreak

{\noindent \bf Acknowledgements.} The first author was supported by a grant from the German Science Foundation (DFG) within the Priority Programme SPP2026 `Geometry at Infinity'. The third author was supported by a Junior Research Fellowship from Clare College, Cambridge, and wishes to thank Alexander Roberts for reading the proof of Lemma \ref{lem_ManyCorners}. The authors wish to thank Giles Gardam for pointing out the remark following Proposition \ref{Prop:2genincoh}, and the referee for observing that the previous version of \cref{thm_deficiency1}  could be improved to its current formulation.

\section{Preliminaries}\label{sec_Prelims}
Throughout, we fix a generating set $\ttx_1,\ldots, \ttx_n$ and let $F_n$ be the free group thereon.

\subsection{Few-relator random groups}
The results in this paper concern the classical few-relator model introduced by Arzhantseva and Ol'shanskii \cite{AO96}. For a fixed $n\geqslant 2$ and $m\geqslant 1$, we consider for $l\geqslant 1$ the set $\rv{R}$ of (ordered) $m$-tuples $r_1,\ldots, r_m$ of cyclically reduced words of length $l$ in the alphabet $\ttx_1,\ldots, \ttx_n$. We consider the uniform probability measure on \rv{R}.
This yields a notion of a `random presentation' \[\gpp{\ttx_1,\ldots, \ttx_n}{r_1,\ldots, r_m}\]
with $n$ generators and $m$ relators. By a standard abuse of nomenclature this is also referred to as a `random group'.

A property ${\cal P}$ of groups (with $n$ generators and $m$ relators) holds {\em with asymptotic probability 1} (or {\em with high probability, w.h.p.}) if
\[\PP\left(\gpp{\ttx_1,\ldots, \ttx_n}{r_1,\ldots, r_m}\text{ has }{\cal P} \right) \to 1 \quad \text{as }l\to\infty.\]
The property $\cal P$ holds {\em with positive asymptotic probability} if
\[ \liminf_{l\to \infty} \PP\left(\gpp{\ttx_1,\ldots, \ttx_n}{r_1,\ldots, r_m}\text{ has }{\cal P} \right) > 0.\]
It is easy to see that if ${\cal P}_1,\ldots, {\cal P}_k$ are finitely many group properties, and each has asymptotic probability 1, then the property that all ${\cal P}_i$ hold simultaneously also has asymptotic probability 1. Similarly, if ${\cal P}_1,\ldots, {\cal P}_k$ have asymptotic probability 1 and ${\cal P}_0$ has positive asymptotic probability, then ${\cal P}_0,\ldots, {\cal P}_k$ hold simultaneously with positive asymptotic probability.

The {\em deficiency} of a finite group presentation $ \gpp{\ttx_1,\ldots, \ttx_n}{r_1,\ldots, r_m}$ is the quantity $n-m$. The {\em deficiency} ${\rm def}(G)$ of a group $G$ is the maximum deficiency of a finite presentation of $G$. Since there is a clear bound ${\rm def}(G)\leqslant b_1(G)$, and since a random group with $n$ generators and $m$ relators has
\[b_1(G) = \max(n-m, 0)\]
with high probability\footnote{This is a `well-known' folklore result which seems to have escaped explicit statement in the papers that the authors are aware of. One may deduce it from results like \cite[Theorem 3.2]{SS10} which state that random relators limit onto a Brownian motion, so that $m$ of them span an $m$-dimensional subspace of $H_1(F_n,\R)$ with high probability. A more elementary argument can be found in \cite{Wilkes19}.}, a random group with $n$ generators and $m\leqslant n$ relators has deficiency exactly $n-m$ with high probability. It thus makes sense to speak of `random groups with deficiency $d$'.

If $G$ is a group of deficiency $n-m$, and the presentation complex given by a presentation $ \gpp{\ttx_1,\ldots, \ttx_n}{r_1,\ldots, r_m}$ is aspherical, then we have $\chi(G) = 1-(n-m) = 1- {\rm def}(G)$. Since presentation complexes of random groups are aspherical with high probability (see Gromov \cite{Gromov93} as quoted below), we may also speak of, for example, `random groups of negative Euler characteristic'.

Many powerful properties of random groups flow from small-cancellation conditions.
\begin{defn}
A tuple of cyclically reduced words $r_1,\ldots,r_m$ satisfies the {\em small cancellation condition} $C''(\lambda)$, for $\lambda>0$, if whenever a reduced word $w$ or its inverse occurs as a cyclic subword both of $r_i$ and $r_j$ (or more than once in a single $r_i$), then $|w|<\lambda |r|$, where $|\cdot|$ denotes word length.
\end{defn}
We abuse terminology by saying that group presentations (and groups) satisfy $C''(\lambda)$ if their tuples of defining relators satisfy $C''(\lambda)$.

This is a slight variant of the usual $C'(\lambda)$ small cancellation: $C''(\lambda)$ is equivalent to `the presentation is $C'(\lambda)$ and no relator is a proper power'. This in particular forbids torsion from our groups. Random groups have no relators which are proper powers with high probability, so our variant condition serves mainly to make statements more concise.

We collect below various theorems from which we may deduce properties of random few-relator groups.
\begin{theorem}[Gromov \cite{Gromov93}, Section 9.B]\label{thm_SmallCancWHP}
For every $\lambda>0$, a random few-relator group satisfies $C''(\lambda)$ with high probability.
\end{theorem}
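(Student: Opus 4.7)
The plan is a standard first-moment (union bound) argument. Fix $\lambda>0$ and set $k=\lceil\lambda l\rceil$. Observe that $C'(\lambda)$ fails exactly when some reduced word $w$ of length $\geqslant k$ occurs (up to inverse) as a cyclic subword of some $r_i$ and $r_j$ at two genuinely distinct locations, possibly with $i=j$. Replacing $w$ by a prefix, we may restrict attention to $|w|=k$. Let $N$ be the number of tuples $(i,j,p,q,\varepsilon,w)$ where $i,j\in\{1,\ldots,m\}$, $p,q$ are cyclic starting positions in $r_i$ and $r_j$, $\varepsilon\in\{\pm1\}$, $w$ is a reduced word of length $k$, the tuple is nontrivial (not just $w$ paired with itself), and reading $r_i$ cyclically from $p$ gives $w$ while reading $r_j$ cyclically from $q$ gives $w^\varepsilon$. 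Then $C'(\lambda)$ fails iff $N>0$, so by Markov's inequality it suffices to prove $\E[N]\to 0$.

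The key probability estimate is: for a uniformly random cyclically reduced word $r$ of length $l$ and a fixed reduced word $w$ of length $k\leqslant l$, the probability that $r$ reads $w$ cyclically from a prescribed position is at most $C_n(2n-1)^{-k}$, with $C_n$ depending only on $n$. This follows from the transfer matrix method: the number of cyclically reduced words of length $l$ is $\Theta((2n-1)^l)$, and the number of such words which read a prescribed $w$ at a prescribed position is $O((2n-1)^{l-k})$, since the remaining $l-k$ letters form a reduced word with prescribed boundary conditions.

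Next I would sum over choices. There are at most $2m^2l^2$ configurations of $(i,j,p,q,\varepsilon)$ and $2n(2n-1)^{k-1}$ reduced words $w$ of length $k$. Since distinct relators are sampled independently, for $i\neq j$ the joint probability of matching at both positions factorises to at most $C_n^2(2n-1)^{-2k}$. Hence the contribution to $\E[N]$ from $i\neq j$ is
\[ O\!\left(l^2\cdot (2n-1)^k\cdot(2n-1)^{-2k}\right)=O\!\left(l^2\cdot(2n-1)^{-\lambda l}\right)=o(1). \]
The case $i=j$ splits according to whether the two copies of $w$ overlap cyclically in $r_i$: when they do not, the same factorisation argument applies within a single uniform $r_i$, giving the same bound; when they do, the two occurrences force $w$ to be periodic with period $|p-q|<k$, which cuts the number of admissible $w$ from $(2n-1)^k$ to at most $(2n-1)^{|p-q|}$ and yields a strictly smaller bound.

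I expect the main obstacle to be the overlapping self-coincidence case $i=j$ with $|p-q|<k$: although it only improves the estimate, one must justify either the periodicity reduction above or an outright independence-style bound for overlapping windows in a single uniform cyclically reduced word. The only other mild technicality is the passage between reduced and cyclically reduced counts, which costs only a multiplicative constant absorbed into $C_n$.
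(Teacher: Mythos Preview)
The paper does not actually prove this theorem; it is quoted as a result of Gromov and used as a black box in the preliminaries. So there is no proof in the paper to compare against.

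Your first-moment argument is the standard one and is essentially correct. The counting is right: there are $\Theta((2n-1)^l)$ cyclically reduced words of length $l$, and $O((2n-1)^{l-k})$ of them read a prescribed reduced $w$ of length $k$ at a prescribed cyclic position, so the single-window probability bound $C_n(2n-1)^{-k}$ holds. Independence for $i\neq j$, and the same counting for two disjoint windows in a single $r_i$, give the $(2n-1)^{-2k}$ joint bound, and the sum over $O(m^2l^2)$ positions and $O((2n-1)^k)$ words yields $\E[N]=O(l^2(2n-1)^{-\lambda l})\to 0$.

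The only point that needs a line more than you wrote is the overlapping self-coincidence with $\varepsilon=-1$: if $w$ at position $p$ and $w^{-1}$ at position $p+d$ overlap in $r_i$, one gets the constraint $w_{j+d}=w_{k+1-j}^{-1}$ on the overlap, a palindrome-type relation that determines roughly half of the overlapping letters from the other half and hence cuts the number of admissible $w$ from $(2n-1)^k$ down to $O((2n-1)^{(k+d)/2})$. This is still enough to make the contribution $o(1)$, just as in your $\varepsilon=+1$ periodicity reduction. With that remark added, the argument is complete.
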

For more in-depth studies of small cancellation in the few-relator model see \cite[Lemma 3]{AO96}, including estimates of the asymptotics of the probability.
\begin{theorem}[See \cite{Gromov87}, Section 4.7 and \cite{Ol12}, Theorem 13.3]\label{ThmGromovHyp}
A group satisfying $C''(1/6)$ is hyperbolic, has aspherical presentation complex and is torsion-free.
\end{theorem}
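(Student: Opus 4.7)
The plan is to derive all three conclusions from the combinatorial consequences of $C'(1/6)$ via the standard tool of van Kampen diagrams. The central technical input would be \emph{Greendlinger's Lemma}: in any reduced van Kampen diagram $D$ over a $C'(1/6)$ presentation containing at least one $2$-cell, there exist at least two boundary faces (``shells'') whose intersection with $\partial D$ is an arc of length strictly greater than half the perimeter of the face. I would first set up this lemma via a combinatorial Gauss--Bonnet argument: assign angles so that interior vertices have angle sum $2\pi$ and each face picks up a curvature defect controlled by its number of interior edges, then use $C'(1/6)$ to bound the length of each interior edge against the face perimeter.

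From Greendlinger's Lemma, hyperbolicity follows by a Dehn-algorithm argument. Given any null-homotopic word $w$, take a minimal-area reduced diagram $D$ for $w$; the lemma furnishes a shell whose exposed arc is longer than its interior arc, so replacing the exposed arc by the shorter interior arc representing the same element strictly shortens $w$. Iterating gives a linear isoperimetric function (in fact a Dehn presentation), which by Gromov's characterisation is equivalent to hyperbolicity.

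Asphericity of the presentation complex reduces to showing that every reduced spherical van Kampen diagram is trivial. But a spherical diagram has empty boundary, so no face can have any portion of its boundary on $\partial D$, contradicting Greendlinger's Lemma as soon as the diagram has any $2$-cells. Equivalently, in the combinatorial Gauss--Bonnet accounting on $S^2$, total curvature must be $4\pi$, while the $C'(1/6)$ condition forces every face to have non-positive combinatorial curvature unless some of its boundary lies on $\partial D$; with empty boundary we get a contradiction. Hence the universal cover of the presentation complex is contractible.

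For torsion-freeness, suppose $g\in G$ has finite order and let $w\in F_n$ be a shortest cyclically reduced word representing a conjugate of $g$; then $w^k=1$ in $G$ for some $k\geqslant 2$. Taking a minimal reduced diagram $D$ for $w^k$ and pushing to the annular setting (by cyclic symmetry of $w^k$), a Greendlinger-type analysis of the resulting reduced annular diagram shows either that a shell allows a Dehn reduction of $w$ (contradicting minimality) or that $w$ is a cyclic conjugate of a proper power of some relator $r_i$; but $r_i$ being a proper power $s^t$ with $t\geqslant 2$ would mean $s$ appears as a cyclic subword of $r_i$ with length $|r_i|/t\geqslant |r_i|/2$, violating $C'(1/6)$. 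Either way we conclude $w=1$, hence $g=1$. The main obstacle in carrying this out cleanly is the careful curvature accounting underlying Greendlinger's Lemma and its adaptation to the annular diagrams needed for torsion.
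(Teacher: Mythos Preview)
The paper does not actually prove this theorem; it merely records it as a background fact with citations to Gromov and Ol'shanski\u{\i}. So there is no ``paper's own proof'' to compare against, and your sketch is an attempt to supply what the paper deliberately outsources.

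Your outline is broadly the standard one and would work, with two remarks. First, Greendlinger's Lemma as you state it is for \emph{disk} diagrams; to rule out reduced spherical diagrams you should either remove one face to obtain a disk (and then the shell produced by Greendlinger shares more than half its boundary with the removed face, giving a piece of length $>\tfrac12|r|$ and contradicting $C'(1/6)$), or run the Gauss--Bonnet argument directly on $S^2$ as you also indicate. Your first sentence about spherical diagrams (``no face can have any portion of its boundary on $\partial D$, contradicting Greendlinger'') elides this step.

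Second, your torsion argument via annular diagrams is heavier than needed. Once you have asphericity, the presentation $2$-complex is a $K(G,1)$, so $G$ has cohomological dimension at most $2$; any nontrivial finite subgroup would have infinite cohomological dimension, so $G$ is torsion-free. This one-line deduction replaces the annular-diagram analysis entirely and avoids the ``main obstacle'' you flag.
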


\begin{theorem}[Wise \cite{Wise04}]
A finitely presented $C''(1/6)$ group acts properly discontinuously and co-compactly on a CAT(0) cube complex.
\end{theorem}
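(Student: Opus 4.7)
The plan is to construct a wallspace structure on the universal cover $\widetilde{X}$ of the presentation $2$-complex $X$ and then invoke Sageev's cubulation construction to produce a CAT(0) cube complex on which $G = \pi_1(X)$ acts.

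The walls arise from Wise's hypergraph construction. In each $2$-cell of $\widetilde{X}$ (an $l$-gon, since every relator has length $l$) join the midpoints of each pair of antipodal boundary edges by an embedded arc. These arcs glue across edges of $\widetilde{X}$ into a collection of embedded subspaces, the hypergraphs. Using Greendlinger's lemma and a standard disk-diagram analysis, the $C'(1/6)$ condition forces each hypergraph $H$ to be a simply-connected tree, to embed in $\widetilde{X}$, to be two-sided, and to split $\widetilde{X} \setminus H$ into exactly two components. This gives a wall structure on the vertex set of $\widetilde{X}$: vertices $v$ and $w$ are separated by the wall associated to $H$ precisely when they lie in different components of $\widetilde{X}\setminus H$.

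Next one establishes \emph{linear separation}: the number of walls crossed by any combinatorial geodesic edge path in $\widetilde{X}$ between $v$ and $w$ is bounded below by a positive linear function of $d(v,w)$. This is the technical heart of the argument and again rests on small-cancellation disk-diagram analysis. The point is that no combinatorial path can return close to a given hypergraph too many times without producing a disallowed piece, so each unit of distance in $\widetilde{X}$ contributes a bounded-below number of wall crossings.

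With the wallspace in hand, Sageev's construction produces a CAT(0) cube complex $C$ whose hyperplanes are in bijection with the walls and on which $G$ acts by combinatorial isometries. Cocompactness follows because the finite presentation yields only finitely many $G$-orbits of $2$-cells, hence finitely many $G$-orbits of hypergraphs, walls, and ultimately of hyperplanes of $C$. Properness follows from linear separation, which forces $C$ to be finite-dimensional and locally finite, combined with torsion-freeness of $C'(1/6)$ groups (as stated above) to rule out nontrivial cell-stabilizers. The main obstacle throughout is the proof of linear separation: this is exactly where the $C'(1/6)$ hypothesis is essential, since weaker small-cancellation assumptions still produce the walls but fail to control separation sharply enough to cubulate the group.
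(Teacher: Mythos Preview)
The paper does not prove this theorem; it is quoted as a black-box result of Wise \cite{Wise04} and used only as an input to Theorem~\ref{thm_C1/6consequences}. So there is no ``paper's own proof'' to compare against.

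Your sketch is a reasonable outline of Wise's actual argument, but a few points are imprecise. First, relators need not all have the same (or even) length, so ``antipodal midpoints'' is not quite the right description; Wise's hypergraph construction pairs edge midpoints in a way that accommodates odd-length relators. Second, your cocompactness argument is incomplete: finitely many $G$-orbits of hyperplanes does not by itself give finitely many $G$-orbits of vertices in the cube complex---one needs control on how many hyperplanes can pairwise cross (bounding the dimension) and on the number of maximal collections of pairwise-crossing walls up to the $G$-action. Third, properness is not a consequence of linear separation plus torsion-freeness alone; one must show that point stabilizers in the cube complex are finite (hence trivial), which again uses that only finitely many walls separate nearby points and that wall stabilizers are well-behaved. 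These are all handled in Wise's paper, but your sketch elides them.
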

\begin{theorem}[Agol \cite{Agol13}]
A hyperbolic group acting properly and co-compactly on a CAT(0) cube complex is virtually special.
\end{theorem}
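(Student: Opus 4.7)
The plan is to follow the strategy developed by Wise and completed by Agol, since this theorem is quoted as a black box in the present paper rather than proven from scratch. I would begin from the Haglund--Wise characterisation of specialness: a non-positively curved cube complex is \emph{special} precisely when its immersed hyperplanes are two-sided, embedded, non-self-osculating, and pairwise non-inter-osculating; equivalently, it admits a local isometry into the Salvetti complex of some right-angled Artin group. The goal is therefore to find a finite-index subgroup $H \leqslant G$ such that the quotient $H\backslash X$ displays none of these four hyperplane pathologies, whereupon $H$ (and hence $G$ virtually) is special by Haglund--Wise.

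The natural approach is induction on a measure of complexity of the cube complex, built from a quasiconvex hierarchy. Given a hyperplane $W$ with stabiliser $G_W$, cutting along $W$ splits $G$ as a graph of groups with edge groups conjugate to $G_W$, and the vertex groups should be handled by the inductive hypothesis. Sageev's construction together with Haglund's work on quasiconvexity ensures that hyperplane stabilisers in a hyperbolic cocompact setting are themselves quasiconvex in $G$. One then needs to arrange, after passing to a finite-index subgroup, that the collection of hyperplane stabilisers is malnormal. The engine powering the induction is Wise's \emph{Malnormal Special Quotient Theorem} (MSQT): given a malnormal collection of quasiconvex subgroups in a hyperbolic virtually special group, sufficiently deep Dehn fillings of these peripherals produce quotients that are again hyperbolic and virtually special. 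Iterated application of MSQT allows one to rebuild a virtually special finite cover of $G$ from the hierarchy of cuts.

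The main obstacle is the MSQT itself, the proof of which combines the relatively hyperbolic Dehn filling machinery of Osin and of Groves--Manning with Wise's extensive theory of cubulated hierarchies and the separability of quasiconvex subgroups in virtually special groups. A further delicate point is choosing the inductive measure so that cutting along a hyperplane strictly decreases it and so that the control needed for the next application of MSQT is maintained after filling. Since reconstructing this argument would amount to rewriting Wise's programme alongside Agol's paper, for the purposes of the current article I would invoke the theorem exactly as cited and proceed to apply it to $C'(1/6)$ random groups, which are hyperbolic and cubulated by the preceding quoted results.
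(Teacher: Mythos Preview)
Your proposal correctly identifies that the paper does not prove this theorem at all: it is merely quoted from \cite{Agol13} as a black-box input to Theorem~\ref{thm_C1/6consequences}, and your final sentence (``invoke the theorem exactly as cited'') matches precisely what the paper does. The additional outline you give of Agol's actual argument (Haglund--Wise specialness criteria, quasiconvex hyperplane stabilisers, Wise's MSQT, and the hierarchy induction) is a reasonable high-level summary of the proof in \cite{Agol13}, but it is extraneous to the present paper, which treats the result purely as a citation.
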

\begin{theorem}[Haglund--Wise \cite{HW08}, Agol \cite{Agol08}]
A virtually special group is virtually RFRS.
\end{theorem}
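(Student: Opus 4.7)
The plan is to reduce the statement to two ingredients: first, the Haglund--Wise theorem that any special group embeds in a right-angled Artin group (RAAG), and second, the fact that RAAGs themselves admit a canonical RFRS tower. Since being RFRS passes to subgroups (a chain $G_i$ witnessing RFRS for a group $A$ restricts to a chain $G_i\cap H$ witnessing RFRS for any $H\leqslant A$, using that the relevant factoring condition is preserved under intersection with $H$), if RAAGs are RFRS then so is every special group. A virtually special group $G$ then has a finite-index special (hence RFRS) subgroup, and is therefore virtually RFRS.

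So I would first recall precisely the definition: $G$ is RFRS if there exists a descending chain $G=G_0\geqslant G_1\geqslant G_2\geqslant \cdots$ of finite-index normal subgroups with $\bigcap_i G_i = 1$ such that for each $i$ the natural map $G_i\to H_1(G_i;\Z)/\text{torsion}$ factors through the quotient $G_i\twoheadrightarrow G_i/G_{i+1}$. Next I would reduce to special groups as above. The bulk of the work is then to produce the canonical tower on a RAAG $A_\Gamma$.

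For the RAAG step, the idea I would implement is the following. Let $A=A_\Gamma$ and define $A_{i+1}$ to be the kernel of the composition $A_i \to H_1(A_i;\Z)\twoheadrightarrow H_1(A_i;\Z/2)$. By construction $A_{i+1}$ is a finite-index normal subgroup and, since killing the mod-$2$ abelianisation certainly kills the free part of the abelianisation modulo $2$, the factoring condition of RFRS holds. The key structural fact I would invoke is that for a RAAG the kernel of the mod-$2$ abelianisation map is itself (isomorphic to) a RAAG on a blown-up graph, so the construction can be iterated uniformly. This reduces the problem to showing $\bigcap_i A_i = 1$. For that, I would note that an element $g\in A_\Gamma$ survives in every $A_i$ only if all of its syllables in every normal form are trivial modulo $2$ after finitely many blow-ups, and this forces $g=1$ by a length argument: each pass to $A_{i+1}$ strictly decreases a natural complexity of a non-trivial element (essentially, it halves exponents in normal form), leading to a contradiction for any nontrivial $g$.

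The main obstacle I expect is the last step, showing that the mod-$2$ tower has trivial intersection. Writing a careful normal-form argument for RAAGs (or alternatively exhibiting the tower geometrically on the universal cover of the Salvetti complex, tracking how lifts of loops are unwound at each stage) is where the real content lies; the reduction to this via Haglund--Wise embedding and the hereditary nature of RFRS are comparatively formal. Once these are in place, the conclusion \emph{virtually special $\Rightarrow$ virtually RFRS} follows immediately.
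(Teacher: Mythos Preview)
The paper does not itself prove this statement; it is cited as background from Haglund--Wise and Agol, so there is no in-paper argument to compare against. Your overall reduction is the standard one and is correct: special groups embed in right-angled Artin groups (Haglund--Wise), RFRS is inherited by subgroups exactly as you say, and hence it suffices that RAAGs are RFRS (Agol).

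The genuine gap is in your sketch of the last step. The ``key structural fact'' you invoke---that the kernel of the mod-$2$ abelianisation of a RAAG is again a RAAG on some blown-up graph---is not a standard result and, as far as I am aware, is false in general: finite covers of Salvetti complexes are special cube complexes but typically not Salvetti complexes, and their fundamental groups need not be RAAGs. Without this, your proposed iteration cannot be set up, and the ``halving exponents in normal form'' complexity argument remains only a heuristic in any case. There is also a second, more subtle issue: even if the tower $A_{i+1}=\ker\big(A_i\to H_1(A_i;\Z/2)\big)$ has trivial intersection, the RFRS factoring condition (that $A_i\to A_i/A_{i+1}$ factor through the \emph{free} abelianisation of $A_i$) can fail whenever $H_1(A_i;\Z)$ picks up $2$-torsion along the way, and you have not ruled this out. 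Agol's actual argument avoids both problems by using that RAAGs are residually torsion-free nilpotent (Duchamp--Krob): the RFRS tower is then extracted from the torsion-free nilpotent quotients (equivalently, from the rational lower central series), where both the trivial-intersection and the factoring conditions come for free, and no identification of the successive kernels as RAAGs is needed.
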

\begin{theorem}[Schreve \cite{Schreve14}]
A virtually special group satisfies the Atiyah conjecture.
\end{theorem}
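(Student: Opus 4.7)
The plan is to embed the group in question into Linnell's class $\mathcal{C}$, which is the smallest class of groups containing all free groups and closed under directed unions and under extensions by elementary amenable groups. Linnell's theorem asserts that any group in $\mathcal{C}$ admitting a uniform bound $d$ on the orders of its finite subgroups satisfies the strong Atiyah conjecture with denominator $d$; once membership in $\mathcal{C}$ and the torsion bound are established, the conclusion is immediate.

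First I would treat right-angled Artin groups (RAAGs). Working by induction on the number of vertices of the defining graph $\Gamma$, I split off a vertex and express $A_\Gamma$ either as a direct product with \Z\ (if the vertex commutes with everything in its link) or as an amalgamated free product/HNN extension over a RAAG on fewer vertices, with vertex and edge groups that are themselves smaller RAAGs; the base case $|\Gamma|=1$ gives \Z, which lies in $\mathcal{C}$. One then invokes Linnell's closure of $\mathcal{C}$ under these operations. Since $\mathcal{C}$ passes to subgroups (preserving the torsion bound) and since any special group embeds in a RAAG by the Haglund--Wise definition of specialness, every special group lies in $\mathcal{C}$, and is moreover torsion-free.

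Next I would promote from special to virtually special. Let $G$ be virtually special and let $H\sbgp G$ be a finite-index special subgroup; then $H\in\mathcal{C}$ and is torsion-free, so every finite subgroup of $G$ has order dividing $[G:H]$. Since finite groups are elementary amenable, $\mathcal{C}$ is closed under the extension $H\nsgp G$, so $G\in\mathcal{C}$ with uniform torsion bound $[G:H]$. Applying Linnell's theorem then yields the strong Atiyah conjecture for $G$.

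The main obstacle is the RAAG step: the splittings used in the induction must be verified to produce edge groups to which the closure properties of $\mathcal{C}$ genuinely apply, and the torsion-free bound must be propagated carefully through each extension. This is precisely the technical core handled by the Linnell--Okun--Schick framework that Schreve invokes; everything else is bookkeeping around it.
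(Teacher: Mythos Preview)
The paper does not prove this statement at all: it is quoted from the literature with a bare citation to \cite{Schreve14}, so there is nothing in the paper to compare your argument against. Your proposal should therefore be assessed on its own merits as a sketch of Schreve's theorem.

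Your broad outline---RAAGs satisfy Atiyah, hence so do their subgroups (special groups), hence so do finite-index overgroups (virtually special groups)---matches the shape of the literature proof. However, the specific mechanism you invoke does not work. Linnell's class $\mathcal{C}$ is, by definition, closed under directed unions and under extensions with \emph{elementary amenable} quotient; it is \emph{not} known to be closed under amalgamated free products or HNN extensions, and there is no ``Linnell closure of $\mathcal{C}$ under these operations'' to invoke. Already $F_2\times F_2$, the RAAG on a square, is not known to lie in $\mathcal{C}$: it has no obvious normal subgroup in $\mathcal{C}$ with elementary amenable quotient, and the inductive splitting you describe expresses it as an HNN extension of $F_2\times\Z$ over $\Z$, which does not help. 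What Linnell--Okun--Schick actually establish is that the class of torsion-free groups \emph{satisfying the Atiyah conjecture} (not $\mathcal{C}$ itself) is closed under the relevant graph-of-groups constructions, via a careful analysis of the division closure and induced modules; alternatively one uses that RAAGs are residually torsion-free nilpotent together with the approximation results of Dodziuk--Linnell--Mathai--Schick--Yates.

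Because of this, your virtual step also does not go through as written: you need $H\in\mathcal{C}$ to use the extension-by-finite closure, and that is exactly what is unavailable. Passing from a finite-index subgroup satisfying Atiyah to the overgroup is in fact the genuine content of Schreve's paper, and it requires additional input (a suitable supply of torsion-free quotients, coming from the virtual specialness) rather than a formal closure property of $\mathcal{C}$. So while you have correctly located where the difficulty lies, the proposed resolution misidentifies the tool: replace ``$\mathcal{C}$ is closed under\ldots'' with the actual closure results for the Atiyah property proved in Linnell--Okun--Schick and Schreve, and the sketch becomes accurate.
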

We combine these into one statement for easier reference later.
\begin{theorem}\label{thm_C1/6consequences}
A finitely presented $C''(1/6)$ group is hyperbolic, 2-dimensional, virtually RFRS and satisfies the Atiyah conjecture. Thus a few-relator random group has these properties with high probability.
\end{theorem}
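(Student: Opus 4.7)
The plan is to simply chain together the preceding theorems in the stated order; this is a bookkeeping result rather than a substantive one, so the work is just to verify that the hypotheses match up along the chain.

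First I would note that hyperbolicity and 2-dimensionality (in the sense of having an aspherical 2-dimensional presentation complex) both follow directly from the Gromov--Ol'shanskii statement for $C'(1/6)$ groups. To get virtual RFRS, I would then invoke Wise's theorem to produce a proper cocompact action on a CAT(0) cube complex, apply Agol's theorem to upgrade this to virtual specialness (the hyperbolicity hypothesis being supplied by the previous step), and finally apply Haglund--Wise/Agol to conclude virtual RFRS. The Atiyah conjecture is then immediate from Schreve's theorem, again using virtual specialness from the previous step.

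For the second sentence, I would appeal to \cref{thm_SmallCancWHP} with $\lambda = 1/6$ to conclude that a random few-relator group is $C'(1/6)$ with high probability, and hence inherits all the properties of the first sentence with high probability. I would also remark that since the list of properties is finite, we may apply them simultaneously with high probability by the observation already made in the preliminaries about intersecting finitely many asymptotic-probability-1 events.

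There is essentially no obstacle here; the only subtlety worth flagging is ensuring that the hypotheses of each invoked theorem (hyperbolicity for Agol, virtual specialness for Haglund--Wise/Agol and Schreve) are supplied by the conclusions of the preceding step, which is why the chain must be traversed in the order Wise $\to$ Agol $\to$ Haglund--Wise/Agol $\to$ Schreve rather than in any other order.
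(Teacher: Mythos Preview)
Your proposal is correct and matches the paper's approach exactly: the theorem is stated in the paper without proof, immediately after the sentence ``We combine these into one statement for easier reference later,'' so it is intended purely as a summary of the chain of cited results you describe. Your ordering Wise $\to$ Agol $\to$ Haglund--Wise/Agol and Schreve, together with the appeal to \cref{thm_SmallCancWHP}, is precisely the intended logic.
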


The notions of cube complexes, special groups, RFRS, etc.\ play no role in this paper other than as stepping stones between theorems, and we shall not trouble to define them. The Atiyah conjecture is discussed in the next section.

\subsection{\texorpdfstring{$\ell^2$}{l\texttwosuperior}-homology, skew-fields and Novikov rings}

A key tool in the proof of \cref{thm_deficiencymore} is the theory of $\ell^2$-homology. We give here a brief description of the theory, following L\"uck~\cite{Luck13}, but focusing only on the algebraic viewpoint.

Let $G$ be any discrete  group; let $X$ be its classifying space, and let $C_\bullet$ be the cellular chain complex of the universal covering of $X$. Letting $\mathcal N(G)$ denote the von Neumann algebra of $G$, we define $b_n^{(2)}(G)$, the \emph{$n^{th}$ $\ell^2$-Betti number of $G$},  to be the \emph{extended} von Neumann dimension of $H_n (\mathcal N(G) \otimes C_\bullet)$.

In fact, given any CW-complex $Y$ with an action of $G$ we may define its $\ell^2$-Betti number to be the extended von Neumann dimension of  $H_n (\mathcal N(G) \otimes C_\bullet(Y))$ where $C_\bullet (Y)$ denotes the cellular chain complex of $Y$. If $G$ is torsion free, the \emph{Atiyah conjecture} is the statement that if $Y$ has finitely many $G$-orbits of cells in every dimension and the action of $G$ on $Y$ is free, then the $\ell^2$-Betti numbers of $Y$ are all integers. In L\"uck's book this is referred to as the strong Atiyah conjecture over $\Q$; there is also a version of this conjecture for groups with \emph{bounded torsion}, that is groups whose torsion has uniformly bounded order.

Instead of tensoring $\Z G$ chain complexes with $\mathcal N(G)$, we may as well tensor with the \emph{Ore localisation} $\mathrm{Ore}(\mathcal N(G))$ of $\mathcal N(G)$ (with respect to the set of all non-zero-divisors of $\mathcal N(G)$). This new ring $\mathrm{Ore}(\mathcal N(G))$ is also known as the algebra of operators affiliated to $\mathcal N(G)$. One can extend the notion of von Neumann dimension to this setting rather easily, since Ore localisations are flat extensions. Now, since we are only interested in $\Z G$ chain complexes, one does not need the entire Ore localisation, but only the \emph{rational closure} of $\Z G$ inside $\mathrm{Ore}(\mathcal N(G))$; this last object is the \emph{Linnell ring}, and is denoted by $\mathcal D(G)$. The key point for us is that Linnell~\cite{Linnell1993} proved that the Atiyah conjecture (for torsion-free groups) is equivalent to $\mathcal D(G)$ being a skew-field, and the $\ell^2$-Betti numbers of $Y$ are in fact $\mathcal D(G)$ dimensions of the homology groups $H_n (\mathcal D(G) \otimes C_\bullet(Y))$ (and hence in particular are all integers). This applies to the situation when $Y$ is the universal cover of a classifying space of $G$, and thus yields
\[
 b_n^{(2)}(G) = \dim_{\mathcal D(G)} H_n (G; \mathcal D(G))
\]
provided that $G$ satisfies the Atiyah conjecture and is torsion free.

\medskip

Let $G$ be a group with presentation
\[\gpp{\ttx_1,\ldots, \ttx_n}{r_1,\ldots, r_m}\]
and assume that this presentation is $C''(1/6)$, so that $G$ is torsion free and satisfies the Atiyah conjecture.

Let $X$ be the associated presentation complex, and let $\widetilde X$ be the universal cover of $X$. The cellular chain complex of $\widetilde X$ over \Z\ takes the form
\[\begin{tikzcd}
0\ar{r} & (\Z G)^m \ar{r}{A} & (\Z G)^n \ar{r}{B} & \Z G
\end{tikzcd}\]
where $B$ is the right multiplication of a row vector in $(\Z G)^n$ by the matrix
\[B = \begin{pmatrix}
\ttx_1-1 & \cdots & \ttx_n-1
\end{pmatrix}^{T} \]
and $A$ is the right multiplication by the Jacobian matrix
\[A = \begin{pmatrix}
\Fox{1}{1}  & \cdots & \Fox{1}{n}\\
\vdots & \ddots & \vdots\\
\Fox{m}{1} & \cdots & \Fox{m}{n}
\end{pmatrix} \]
whose entries are the Fox derivatives of the $r_i$---see \cite{Fox53}.

As the presentation complex for our $C''(1/6)$ presentation is aspherical, the above complex of $G$-modules provides a free resolution of $\Z$ and thus may be used to compute the (co)homology of $G$. In particular, the chain complex
\[\begin{tikzcd}
0\ar{r} & ({\cal D}(G))^m \ar{r}{{\cal D}(G)\otimes A} & ({\cal D}(G))^n \ar{r}{{\cal D}(G)\otimes B} & {\cal D}(G)
\end{tikzcd}\]
computes the $\ell^2$-homology $H_i(G;{\cal D}(G))$.

Our primary concern in Section \ref{sec_MinimumConditions} will be to find conditions ensuring that the map ${\cal D}(G)\otimes A$ is {\em injective}, so that the second $\ell^2$-Betti number
\[b^{(2)}_2(G) = \dim_{{\cal D}(G)}(\ker({\cal D}(G)\otimes A))\]
vanishes. If the group $G$ is 2-dimensional, then once the second $\ell^2$-Betti number is known, we may also find---for example, by way of the Euler characteristic \cite[Theorem 1.35]{Luck13}---the value of $b^{(2)}_1(G)$. In particular, if $b^{(2)}_2(G) = 0$ then $b^{(2)}_1(G)=n-m-1$.

\begin{theorem}\label{thm_ConsequenceOfL2}
Let $G$ be a group with a finite $C''(1/6)$ presentation of deficiency 1.
\begin{enumerate}
 \item If $b^{(2)}_2(G)=0$ then $G$ is virtually free-by-cyclic and therefore coherent.
 \item If there exists an epimorphism $\phi \colon G \to \Z$ with finitely generated kernel, then $G$ is free-by-cyclic.
\end{enumerate}
\end{theorem}
\begin{proof}
	By Theorem \ref{thm_C1/6consequences}, the group $G$ is virtually RFRS and of dimension 2. By the discussion above, we have
	\[
	b^{(2)}_2(G)=b^{(2)}_1(G)=0
	\]
By \cite[Theorem 5.4]{Kielak19}, there is a finite-index subgroup $G'$ of $G$ which admits a map $G'\twoheadrightarrow \Z$ with kernel of type FP${}_2$. We may now apply a theorem of Gersten \cite{Gersten96} to see that the kernel is hyperbolic and hence finitely presented. Finally, we may deduce that the kernel is free by \cite{Bieri76}. A virtually free-by-cyclic group is coherent by \cite{FH99}.

\smallskip
Now suppose that we have an epimorphism $\phi \colon G \to \Z$ with $K = \ker \phi$ finitely generated. We then have $b^{(2)}_1(G)=0$ by a result of L\"uck~\cite[Theorem 1.39]{Lueck2002}, and therefore $b^{(2)}_2(G)=0$ as well, as above. We conclude that $G$ is coherent from (1), and so the group $K$ is finitely presented. Now we argue precisely as above, and conclude that $K$ is free.
\end{proof}

We end this section by noting that for random groups with 2 generators, the (in)coherence of a group is known with high probability.
\begin{clly}\label{clly_2Gen1Rel}
With high probability, a random 2-generator 1-relator group $G$ is virtually free-by-cyclic.
\end{clly}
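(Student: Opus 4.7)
The plan is to reduce the corollary to Theorem~\ref{thm_ConsequenceOfL2}, which requires only that $G$ admit a $C'(1/6)$ presentation of deficiency $1$ and that $b^{(2)}_2(G)=0$. The deficiency hypothesis is built in: a random $2$-generator $1$-relator presentation has deficiency exactly $1$. The $C'(1/6)$ condition holds with high probability by Theorem~\ref{thm_SmallCancWHP}. So the remaining task is to verify that $b^{(2)}_2(G)=0$ w.h.p.

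For this I would invoke the result of Dicks and Linnell~\cite{DL07} mentioned in the introduction, which computes the $\ell^2$ Betti numbers of all $1$-relator groups. With high probability the defining relator is not a proper power (the number of cyclically reduced words of length $l$ which are proper powers is negligible compared to the total), and in the case of a torsion-free $1$-relator group the Dicks--Linnell formula gives $b^{(2)}_2(G)=0$. Thus all the hypotheses of Theorem~\ref{thm_ConsequenceOfL2} are simultaneously satisfied w.h.p., and that theorem delivers the conclusion that $G$ is virtually free-by-cyclic.

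The argument is therefore essentially bookkeeping: collect finitely many high-probability events (the presentation being $C'(1/6)$, the relator not being a proper power), use the standard fact that a finite intersection of w.h.p.\ events is w.h.p., and apply the two quoted black boxes. I do not anticipate any genuine obstacle; the only point worth double-checking is that the applicability of Dicks--Linnell to the random relator does not need any side condition beyond the torsion-freeness granted by $C'(1/6)$ and the non-power property. If a subtlety arose, one could instead argue directly: since $G$ is $2$-dimensional with $\chi(G)=0$ and $b_0^{(2)}(G)=0$ (as $G$ is infinite), the Euler characteristic formula $\chi(G)=\sum_i(-1)^i b_i^{(2)}(G)$ forces $b_1^{(2)}(G)=b_2^{(2)}(G)$, so it suffices to establish the vanishing of either one, which for $1$-relator groups is classical.
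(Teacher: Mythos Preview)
Your proposal is correct and follows essentially the same route as the paper: with high probability the presentation is $C'(1/6)$ (hence torsion-free), Dicks--Linnell \cite{DL07} gives the vanishing of the $\ell^2$ Betti numbers, and Theorem~\ref{thm_ConsequenceOfL2} finishes the job. The only cosmetic difference is that the paper gets torsion-freeness directly from the $C'(1/6)$ condition via Theorem~\ref{thm_C1/6consequences}, so your separate ``not a proper power'' argument is unnecessary.
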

\begin{proof}	
By Theorem \ref{thm_C1/6consequences}, the presentation is $C''(1/6)$ with high probability. The $\ell^2$-Betti numbers of a torsion-free 2-generator 1-relator group all vanish by \cite{DL07}, and we may now apply the theorem.
\end{proof}
\begin{rmk}
It is worth comparing this with the earlier result of Dunfield and Thurston \cite{DT06}: a random 2-generator 1-relator group is {\em fibred} with asymptotic probability strictly between 0 and 1 \cite[Theorem 6.1]{DT06}, but is {\em virtually} fibred with high probability.
\end{rmk}
\begin{prop}\label{Prop:2genincoh}
Let $G$ be a random group with two generators and $m\geqslant 2$ relators. Then with high probability $b^{(2)}_1(G)=0$ and $G$ is incoherent.
\end{prop}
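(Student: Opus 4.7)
The plan is to combine the small-cancellation structure with the $\ell^2$-homological machinery introduced above. With high probability the presentation is $C'(1/6)$, so by \cref{thm_C1/6consequences} the group $G$ is torsion-free, hyperbolic, two-dimensional, virtually RFRS, and satisfies the Atiyah conjecture; the presentation complex is aspherical, giving $\chi(G) = m - 1 \geqslant 1$. A nontrivial torsion-free group must be infinite, and the trivial group would force a random $2 \times m$ exponent-sum matrix to be unimodular (a negligible event as $l \to \infty$), so $G$ is infinite with high probability and $b^{(2)}_0(G) = 0$.

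To obtain $b^{(2)}_1(G) = 0$ I will work with the $\mathcal{D}(G)$-chain complex of the universal cover described in \cref{sec_Prelims}. Because $G$ is infinite, $\mathcal{D}(G) \otimes B$ is surjective, so its kernel is $1$-dimensional over $\mathcal{D}(G)$ and the image of the Fox-Jacobian map $\mathcal{D}(G) \otimes A$ has $\mathcal{D}(G)$-dimension at most $1$. It therefore suffices to show that $A$ is nonzero in $\mathcal{D}(G)$, and since $\Z G$ embeds in $\mathcal{D}(G)$ (as $G$ is torsion-free and Atiyah), it is enough to check that $A \neq 0$ in $\Z G$. Applying the augmentation $\Z G \to \Z$ to an entry $\partial r_i / \partial x_j$ recovers the exponent sum of $x_j$ in $r_i$, and with high probability the resulting matrix of exponent sums has maximal rank---this is the folklore first-Betti-number calculation cited in the introduction. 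Plugging $b^{(2)}_1(G) = 0$ into the Euler-characteristic formula then yields $b^{(2)}_2(G) = m - 1 > 0$.

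For incoherence I will apply \cite[Theorem 5.4]{Kielak19} with $n = 1$: since $G$ is virtually RFRS with $b^{(2)}_1(G) = 0$, there is a finite-index subgroup $G' \leqslant G$ and a surjection $G' \twoheadrightarrow \Z$ with finitely generated kernel $K$. If $K$ were of type $\mathrm{FP}_2$---in particular if $K$ were finitely presented---then applying the same theorem to the virtually RFRS group $G'$ with $n = 2$ would force $b^{(2)}_2(G') = 0$, contradicting the multiplicativity relation $b^{(2)}_2(G') = [G : G'](m - 1) > 0$. Hence $K$ is a finitely generated but not finitely presented subgroup of $G' \leqslant G$, so $G$ is incoherent. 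The main technical obstacle is simply packaging the folklore high-probability statements cleanly (infiniteness of $G$ and non-degeneracy of the exponent-sum matrix), but both reduce to standard random-walk calculations, so no new difficulty arises beyond what is already in \cref{sec_Probability}.
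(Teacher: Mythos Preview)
Your proof is correct and, for the computation $b^{(2)}_1(G)=0$, follows the same outline as the paper: infiniteness of $G$ makes $\mathcal D(G)\otimes B$ surjective with one-dimensional kernel, and nonvanishing of the Jacobian forces the image of $\mathcal D(G)\otimes A$ to fill that kernel. The only cosmetic difference is how $A\neq 0$ is verified---the paper uses asphericity (which already gives $A$ \emph{injective} over $\Z G$), whereas you read it off the exponent-sum matrix via the augmentation.

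For incoherence the two arguments genuinely diverge. The paper virtually fibres via \cite[Theorem~5.3]{Kielak19} and then excludes a finitely presented fibre using Bieri \cite{Bieri76}: a finitely presented normal subgroup of a $2$-dimensional group with quotient $\Z$ must be free, forcing $\chi=0$. Your route stays entirely inside the $\ell^2$ framework, arguing that an $\mathrm{FP}_2$ fibre would give $b^{(2)}_2(G')=0$, contradicting $b^{(2)}_2(G')=[G:G'](m-1)>0$ by multiplicativity. This is valid, but note that the implication ``$K$ of type $\mathrm{FP}_2$ $\Rightarrow$ $b^{(2)}_2(K\rtimes\Z)=0$'' is the \emph{easy} direction and is really L\"uck's mapping-torus vanishing rather than the hard direction of \cite{Kielak19}; the citation should be adjusted accordingly. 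The paper's Bieri argument is marginally more elementary and self-contained, while yours has the virtue of not depending on the ambient group being $2$-dimensional.
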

\begin{proof}
With high probability the presentation complex is aspherical and $G$ is infinite, virtually RFRS, and satisfies the Atiyah conjecture. Consider the chain complex
\[\begin{tikzcd}
0\ar{r} & ({\cal D}(G))^m \ar{r}{{\cal D}(G)\otimes A} & ({\cal D}(G))^2 \ar{r}{{\cal D}(G)\otimes B} & {\cal D}(G)
\end{tikzcd}\]
which computes the $\ell^2$-Betti numbers. Since $G$ is infinite we have $b^{(2)}_0(G)=0$, so ${\cal D}(G)\otimes B$ is surjective and hence $\ker ({\cal D}\otimes B)$ has ${\cal D}(G)$-dimension 1. But since the presentation complex was aspherical, $A$ is injective and hence ${\cal D}(G)\otimes A$ is not the zero map, and its image has dimension at least 1---and must therefore equal $\ker ({\cal D}(G)\otimes B)$. It follows that $b^{(2)}_1(G)=0$.

By \cite[Theorem 5.3]{Kielak19} the group $G$ is virtually fibred. However the finitely generated fibre in question cannot be finitely presented---if it were, then by \cite{Bieri76} the fibre is free and we find $\chi(G)=0$, a contradiction. Hence $G$ is incoherent.
\end{proof}
\begin{rmk}
It is worth remarking that this result also holds in the density model of random groups at densities $d<1/6$ (keeping the number of generators at $2$). The key conditions are that the random group $G$ is infinite, the given presentation complex is aspherical, and that $G$ acts properly and cocompactly on a CAT(0) cube complex (whence is virtually RFRS and satisfies the Atiyah conjecture, by theorems previously cited). Theorem \ref{ThmGromovHyp} applies at all densities $d<1/2$, and random groups are cubulated at densities $d<1/6$ by a theorem of Ollivier and Wise \cite{OW11}.
\end{rmk}
\begin{conj}
A random group with $n$ generators and $m$ relators has
\[b^{(2)}_1(G) = \max(n-m-1, 0) \]
with high probability, hence is coherent with high probability if $\chi(G)\leqslant 0$ and incoherent with high probability if $\chi(G)>0$.
\end{conj}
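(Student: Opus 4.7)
The plan is to split the conjecture at $\chi(G) = 0$. When $\chi(G) \leq 0$, that is $m \leq n - 1$, most cases are already handled: Theorem \ref{thm_deficiencymore} gives the conclusion with high probability for deficiency at least $2$, Corollary \ref{clly_2Gen1Rel} handles the $(n,m)=(2,1)$ case, and Theorem \ref{thm_deficiency1} covers deficiency $1$ with $n \geq 3$, but only with positive probability. The only missing step in this range is therefore to upgrade Theorem \ref{thm_deficiency1} to high probability. For this I would revisit Section \ref{sec_Probability}, where the obstruction is that the combinatorial conditions ensuring $b^{(2)}_2(G) = 0$ are only verified with positive probability; one would try either to weaken those conditions, or to replace them with alternative sufficient conditions that do hold with high probability.

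When $\chi(G) \geq 1$, that is $m \geq n$, the conjecture asserts both $b^{(2)}_1(G) = 0$ and incoherence. The incoherence half is essentially free once $b^{(2)}_1(G) = 0$ is known: the argument of the preceding Proposition carries over verbatim, using \cite[Theorem~5.3]{Kielak19} to virtually fibre $G$, and \cite{Bieri76} to rule out a finitely presented fibre since $\chi(G) > 0$. So the real task is to prove $b^{(2)}_1(G) = 0$ when $m \geq n$ and $n \geq 3$.

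As in the Proposition, this reduces to showing that the image of $\mathcal{D}(G) \otimes A$ is the full $(n-1)$-dimensional kernel of $\mathcal{D}(G) \otimes B$, equivalently that the $m \times n$ Fox Jacobian has $\mathcal{D}(G)$-rank exactly $n-1$. For $n = 2$ the asphericity of the presentation $2$-complex, which holds with high probability by Theorem \ref{thm_C1/6consequences}, is enough: any single column of $A$ is non-zero. For $n \geq 3$ a genuine rank computation over the skew-field $\mathcal{D}(G)$ is required. A natural strategy is to pick $n - 1$ relators from the $m$, observe that the sub-presentation of deficiency $1$ they define has Jacobian of $\mathcal{D}(G_0)$-rank $n - 1$ by (an improved) Theorem \ref{thm_deficiency1}, and then transfer this statement along the quotient $G_0 \twoheadrightarrow G$.

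The transfer step is, I expect, where the proof breaks down: there is no ring homomorphism $\mathcal{D}(G_0) \to \mathcal{D}(G)$ induced by the group quotient, so ranks cannot simply be pushed forward. Overcoming this would likely require either the universal skew-field of fractions machinery of Jaikin-Zapirain, which interacts better with quotients, or a direct probabilistic argument showing that the Fox derivatives of a random tuple of words generate a full-rank $\mathcal{D}(G)$-submodule of $\mathcal{D}(G)^n$. Identifying the right tool here, together with strengthening the probabilistic bounds of Section \ref{sec_Probability} from positive to high probability, appears to be the crux of the conjecture.
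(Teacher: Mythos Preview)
The statement you are addressing is labelled as a \emph{conjecture} in the paper, and no proof is offered there; it is presented as open. Your proposal is likewise not a proof---and you do not claim it is---but a strategic outline identifying which cases are settled and where the genuine obstacles lie.

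Your breakdown of the $\chi(G)\leqslant 0$ range is accurate: Theorem~\ref{thm_deficiencymore} and Corollary~\ref{clly_2Gen1Rel} give high probability in deficiency $\geqslant 2$ and in the $(n,m)=(2,1)$ case, and the missing piece is upgrading Theorem~\ref{thm_deficiency1} from positive to high probability. This is exactly the bottleneck in the paper: Proposition~\ref{prop_Deficiency1Probability} only achieves positive probability because in deficiency~1 there is essentially a unique slope $\phi$, so the many-slopes counting argument of Proposition~\ref{prop_HighDeficiencyProbability} is unavailable. Weakening the minimum condition or finding an alternative sufficient condition is indeed the natural line of attack, but no such condition is known.

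In the $\chi(G)>0$ range your reduction to showing that the Fox Jacobian has $\mathcal D(G)$-rank $n-1$ is correct, and you are right that once $b^{(2)}_1(G)=0$ is established the incoherence argument of the preceding Proposition goes through unchanged. You also correctly isolate the point of failure in the sub-presentation strategy: there is in general no ring map $\mathcal D(G_0)\to\mathcal D(G)$ along a group surjection $G_0\twoheadrightarrow G$, so rank over $\mathcal D(G_0)$ does not transfer. The alternatives you mention (universal skew-field of fractions techniques, or a direct probabilistic argument over $\mathcal D(G)$) are plausible directions, but neither is carried out, and the conjecture remains open exactly as the paper leaves it.
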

\section{Minimum conditions and $\ell^2$-homology}\label{sec_MinimumConditions}
\subsection{The minimum condition}
\begin{defn}
Let $r$ be a word of length $l$ on $\ttx_1,\ldots, \ttx_n$. Let $C_r$ denote the cycle of length $l$, viewed as a graph with $l$ edges of length $1$; we endow $C_r$ with a basepoint and an orientation. We further label the edges by  ${\ttx_1}^{\pm 1},\ldots, {\ttx_n}^{\pm 1}$ in such a way that reading the labels starting from the basepoint and following the orientation yields $r$. In this way vertices of $C_r$ become identified with prefixes of the word $r$.

For a homomorphism $\phi\colon F_n \to \Z$ with $\phi(r)=0$, consider the induced piecewise affine continuous map $\phi_r\colon C_r\to \R$ that sends vertices to the value under $\phi$ of the corresponding prefixes. The {\em lower section} of $r$ is the subset
\[L_\phi(r)=\phi_r^{-1}(\min(\phi_r(C_r)))\subseteq C_r.\]
We will also speak of the vertices and edges of the lower section as being {\em $\phi$-minimal}.
\end{defn}
\begin{defn}\label{def_MinimumCondition}
Let $(r_1,\ldots,r_m)$ be a tuple of cyclically reduced words over $\ttx_1,\ldots, \ttx_n$ (where $n>m$) and let $\phi\colon F_n\to \Z$. The pair $((r_1,\ldots, r_m),\phi)$ {\em satisfies the minimum condition} if, possibly after reordering the $\ttx_i$, 
the following properties hold:
\begin{itemize}
\item $\phi(r_i) = 0$ for all $i$;
\item for all $i$ the lower section $L_\phi(r_i)$ consists either of exactly one vertex, whose two adjacent edges are labelled by $\ttx_i^\pm$ and $\ttx_n^\pm$, or of exactly one edge, labelled by $\ttx_i^\pm$, whose adjacent edges are both labelled by $\ttx_n^\pm$.
\end{itemize}
The pair $((r_1,\ldots, r_m),\phi)$ satisfies the {\em standard} minimum condition if, in addition, we have $\phi(\ttx_i) \geqslant 0$ for all $i<n$ and $\phi(\ttx_n) <0$. Note that $\phi(\ttx_n)=0$ is automatically forbidden by the definition.

Finally, we say that $(r_1, \ldots, r_{m})$ {\em satisfies a (standard) minimum condition} if some $\phi$ exists such that $((r_1,\ldots, r_m),\phi)$ satisfies the (standard) minimum condition.
\end{defn}

One may dually define the {\em (standard) maximum condition} by reversing all the signs in Definition \ref{def_MinimumCondition}.
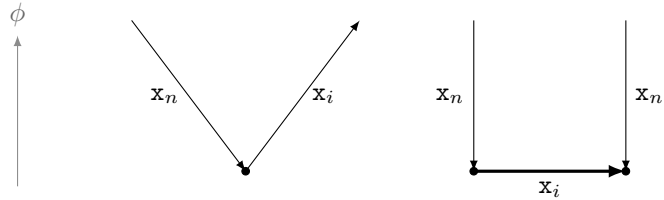
\begin{figure}[ht]
\centering
\begin{tikzpicture}[>=latex]
\draw[->, gray!90] (-2,-0.2) -- (-2, 1.8);
\draw[gray!120] (-2,1.8) node[above]{$\phi$};
\draw[<-] (1,0) -- (-0.5,2) node[midway, left]{$\ttx_n$};
\draw[->] (1,0) -- (2.5,2) node[midway, right]{$\ttx_i$};
\filldraw (1,0) circle (0.05);

\begin{scope}[shift={(4,0)}]
\draw[->] (0,2) -- (0,0) node[midway, left]{$\ttx_n$};
\draw[->] (2,2) -- (2,0) node[midway, right]{$\ttx_n$};
\draw[->, very thick] (0,0) -- (2,0) node[midway, below]{$\ttx_i$};
\filldraw (0,0) circle (0.05);
\filldraw (2,0) circle (0.05);
\end{scope}
\end{tikzpicture}
\caption{Allowed minima of $r_i$ with respect to $\phi$ in the standard minimum condition. There is no imposition on the orientation of $r_i$ in this diagram---that is, the minimum  may occur at/near either an $\ttx_i$ edge or an $\ttx^{-1}_i$ edge. Non-standard minimum conditions allow for the reversal of arrows (with respect to $\phi$).}
\end{figure}

\begin{rmk}\label{rmk_ReplaceWithStandard}
Suppose that $((r_1,\ldots, r_m),\phi)$ satisfies the minimum condition. Then by possibly performing substitutions of the form $\ttx_i \mapsto \ttx_i^{-1}$, one obtains a pair  $((r'_1,\ldots, r'_m),\phi')$ satisfying the {\em standard} minimum condition. Since one naturally has an isomorphism of groups
\[\gpp{\ttx_1,\ldots, \ttx_n}{r_1,\ldots, r_m} \iso \gpp{\ttx_1,\ldots, \ttx_n}{r'_1,\ldots, r'_m} \]
one may always assume that the minimum condition is the standard minimum condition when deducing group-theoretic properties. Moreover, the homomorphism induced by $\phi$ on the first group agrees with that defined by $\phi'$ on the second one.
\end{rmk}

The key usage of the minimum condition in this article will be the following theorem.

\begin{theorem}\label{thm_MinCondImpliesL2}
Suppose that $((r_1,\ldots, r_m),\phi)$ satisfies the minimum condition. Suppose that \[G=\gpp{\ttx_1,\ldots, \ttx_{n}}{r_1,\ldots, r_m}\]
is torsion free, satisfies the Atiyah conjecture, and that the images of the generators $\ttx_i$ in $G$ are all non-trivial.
Then $b_2^{(2)}(G)=0$

If additionally $n=m+1$, and $((r_1,\ldots, r_m),\phi)$ satisfies the maximum condition, then the epimorphism $G \to \Z$ induced by $\phi$ is \emph{algebraically fibred}, that is, its kernel is finitely generated.
\end{theorem}

\subsection{Novikov rings}

In the presence of a homomorphism $\phi\colon G\to\Z$, with kernel $K$, and a choice of section $s\colon \Z\to G$, $s(1)=t$, we may identify the group ring $\Q G$ with a {\em twisted group ring} $(\Q K)\Z=(\Q K)[t,t^{-1}]$ as in \cite[Section 2.2]{Kielak19} (twisted group rings are usually called \emph{crossed products} in the literature). For our purposes we may consider this as a `semi-direct product' of a group ring by a group. Each $g\in G$ may be written uniquely as
\[g = k_g t^{\phi(g)}\]
for some $k_g \in K$. Elements in the group ring $\Q G$ thus acquire the structure of a polynomial in $t$ and $t^{-1}$ with coefficients in $\Q K$. Addition is the usual addition of polynomials, whereas the ring multiplication is twisted via the formula
\begin{equation}(k_g t^{\phi(g)}) (k_h t^{\phi(h)}) = \left(k_g {k_h}^{t^{-\phi(g)}}\right) t^{\phi(g)+\phi(h)} = k_{gh}t^{\phi(gh)}.\label{eq_TwistedConvolution}\end{equation}

The minimum condition of the previous section is chosen with the following property in mind.
\begin{lem}\label{lem_FoxDerivsOfMinimumCondition}
Suppose that $((r_1,\ldots, r_m),\phi)$ satisfies the standard minimum condition and choose some $t\in G$ such that $\phi(t)=1$. Define
\[P_i = \min \phi_r(C_r).\]
Then
\[\Fox{i}{i} = k_i t^{P_i} + \sum_{p > P_i} \nu_{i,i,p}t^p, \quad \Fox{i}{j} = \sum_{p > P_i} \nu_{i,j,p}t^p \]
for some $k_i\in K = \ker(G\stackrel{\phi}{\longrightarrow} \Z)$ and some $\nu_{i,j,p} \in \Q K$.
\end{lem}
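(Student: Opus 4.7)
The plan is to unwind the Fox derivative definition and identify each summand, via the twisted decomposition $g = k_g t^{\phi(g)}$, with the height of a specific endpoint of an edge of $C_{r_i}$; the geometry of the standard minimum condition then pins down exactly the stated lowest-$t$-degree behaviour.

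Concretely, I would write $r_i = s_1 s_2 \cdots s_l$ with $s_k \in \{\ttx_1^{\pm 1},\ldots,\ttx_n^{\pm 1}\}$ and vertices $v_0,\ldots,v_l = v_0$ of $C_{r_i}$ at heights $\phi_{r_i}(v_k) = \phi(s_1 \cdots s_k)$. The Fox derivative is
\[
\frac{\partial r_i}{\partial \ttx_j} \;=\; \sum_{k:\, s_k = \ttx_j} s_1\cdots s_{k-1} \;-\; \sum_{k:\, s_k = \ttx_j^{-1}} s_1\cdots s_k,
\]
so each summand is (a sign times) a group element of $\phi$-image equal to $\phi_{r_i}(v_{k-1})$ or $\phi_{r_i}(v_k)$ respectively, i.e.\ the height of one of the two endpoints of the $\ttx_j^{\pm 1}$-edge between $v_{k-1}$ and $v_k$. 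A short case-check using the sign of $\phi(\ttx_j)$ shows that for $j<n$ this endpoint is the \emph{lower} one, while for $j=n$ it is the \emph{upper} one.

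Because every vertex of $C_{r_i}$ has height at least $P_i$, every summand of $\partial r_i/\partial \ttx_j$ has $t$-degree $\geq P_i$. I would then locate the summands of $t$-degree exactly $P_i$. For $j=n$: an upper endpoint at height $P_i$ would force a lower endpoint at $P_i + \phi(\ttx_n) < P_i$, contradicting minimality. For $j<n$: such a summand requires the lower endpoint (or both endpoints, when $\phi(\ttx_j)=0$) to lie in $L_\phi(r_i)$. The two subcases of the standard minimum condition now finish things off. In the single-vertex subcase the unique $\phi$-minimal vertex is incident only to $\ttx_i^{\pm 1}$ and $\ttx_n^{\pm 1}$ edges, so only $j=i$ produces a summand at level $P_i$, and exactly one. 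In the single-edge subcase the lower section is the edge $e$ labelled $\ttx_i^{\pm 1}$ together with its two endpoints, whose other incident edges are both $\ttx_n^{\pm 1}$; so again only $j=i$ contributes, via $e$ itself (any other $\ttx_i^{\pm 1}$-edge, since $\phi(\ttx_i)=0$ here, would need both endpoints in $L_\phi(r_i)$ to contribute at height $P_i$, contradicting $L_\phi(r_i)=\{e\}$).

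Hence $\partial r_i/\partial \ttx_i$ has a unique summand of $t$-degree $P_i$, of the form $\pm g$ with $g\in G$ and $\phi(g)=P_i$, which in the twisted-ring notation reads $k_i t^{P_i}$ with $k_i \in K$ (absorbing the sign into $k_i$); every other summand of every $\partial r_i/\partial \ttx_j$ sits in strictly higher $t$-degree, and gathering their coefficients yields the claimed $\nu_{i,j,p} \in \Q K$. The only genuine bookkeeping obstacle is tracking which endpoint the Fox summand assigns its height to, but this is immediate once the sign of $\phi(\ttx_j)$ is fixed via the standard condition.
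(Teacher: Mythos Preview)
Your argument is correct and follows essentially the same line as the paper's proof: identify each Fox-derivative summand with the $\phi$-height of an endpoint of the corresponding edge, and then use the structure of $L_\phi(r_i)$ imposed by the standard minimum condition to isolate the unique minimal-degree term. The paper organises the final step as an explicit enumeration of four cases (vertex versus edge minimum, two orientations of $\ttx_i$ each), whereas you streamline this into the lower-endpoint/upper-endpoint dichotomy governed by the sign of $\phi(\ttx_j)$; the content is the same.
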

\begin{proof}
Recall---for example, from Formula 2.5 in \cite{Fox53}---that the Fox derivative \FoxB{i}{j} acquires a term $+u$ for each occurence of $\ttx_j$ in $r_i$ such that $r_i = u\ttx_j v$, and a term $-u\ttx_j^{-1}$ for each occurence of $\ttx_j^{-1}$ in $r_i$ where $r_i = u\ttx_j^{-1} v$. No initial subword of $r_i$ evaluates under $\phi$ to a value smaller than $P_i$, so that no \FoxB{i}{j} has any $t^p$-term for any $p<P_i$.

A $t^{P_i}$-term can only arise from an initial segment $u$ with $\phi(u)= P_i$---either where $u$ ends with a letter $\ttx_j^{-1}$ or is followed by a letter $\ttx_j^{+1}$. By the standard minimum condition, there are either one or two initial segments of $r_i$ attaining the minimal $\phi$-value, depending whether the lower section is a vertex or an edge. Note that the standard minimum condition gives one of four cases, depending on whether the lower section is a vertex or an edge, and on the relative orientations of $r_i$ and the relevant edge $\ttx_i$ at the minimum:
\begin{itemize}
\item $\phi(\ttx_i)>0$, and there is a unique initial segment $u=u'\ttx_n$ of $r_i$ such that $\phi(u)=P_i$ and $r_i=u'\ttx_n\ttx_i v$.
\item $\phi(\ttx_i)>0$, and there is a unique initial segment $u=u'\ttx_i^{-1}$ of $r_i$ such that $\phi(u)=P_i$ and $r_i=u'\ttx_i^{-1}\ttx_n^{-1}v$.
\item $\phi(\ttx_i)=0$, and the are exactly two initial segments $u=u'\ttx_n$ and $u\ttx_i$ of $r_i$ such that $\phi(u)=\phi(u\ttx_i)=P_i$, and $r_i=u'\ttx_n\ttx_i\ttx_n^{-1} v$.
\item $\phi(\ttx_i)=0$, and the are exactly two initial segments $u=u'\ttx_n\ttx_i^{-1}$ and $u'\ttx_n$ of $r_i$ such that $\phi(u)=\phi(u'\ttx_n)=P_i$, and $r_i=u'\ttx_n\ttx_i^{-1}\ttx_n^{-1} v$.
\end{itemize}
In all four cases, there is precisely one $t^{P_i}$-term $u = k_ut^{P_i}=k_it^{P_i}$ occuring in \FoxB{i}{i}, and no $t^{P_i}$-term in any other \FoxB{i}{j}.
\end{proof}
\begin{rmk}
It is worth reconsidering Definition \ref{def_MinimumCondition} in light of this previous result. If our definition of `minimum condition' had been simply that each relator has a unique $\phi$-minimal vertex, then it could well be the case that $r_1$ and $r_2$ have minima of the form $r_1=u_1\ttx_1^{-1} \ttx_2 v_1$ and $r_2=u_2\ttx_1^{-1} \ttx_2 v_2$---so that both $\FoxB{1}{1}$ and $\FoxB{1}{2}$ have $t^{P_1}$-terms, and both $\FoxB{2}{1}$ and $\FoxB{2}{2}$ have $t^{P_2}$-terms. As will be seen in the proof of Proposition \ref{prop_RowReduction}, such a circumstance is undesirable.

The special role of $\ttx_n$ in Definition \ref{def_MinimumCondition} guarantees that we may invert relators to obtain the standard minimum condition and thus obtain Lemma \ref{lem_FoxDerivsOfMinimumCondition}.
\end{rmk}
Elements of the group ring having the special form taken by \FoxB{i}{i} in Lemma \ref{lem_FoxDerivsOfMinimumCondition} are especially amenable to computation in a certain ring containing $\Q G$, the {\em Novikov ring}. We give here a simplified statement which is less broad than the most general definition, but is sufficient for our needs. Proposition 3.8 of \cite{Kielak19} gives the equivalence of our statement with the `true' definition.

Novikov rings are treated in detail in the notes of Sikorav~\cite{Sikorav}.

\begin{defn}
Let $G$ be a group and let $\phi\colon G\to \Z$ be a homomorphism with kernel $K$. The {\em Novikov ring} \Nov\ of $G$ with respect to $\phi$ is the ring of formal series
\[\sum_{p=P}^\infty \nu_p t^p\]
for $P\in \Z$ and $\nu_p \in \Q K$. Addition is termwise, and multiplication is given by the linear extension of the twisted convolution formula \eqref{eq_TwistedConvolution}.
\end{defn}
The Novikov ring is thus the ring of formal power series in $(\Q K)[[t,t^{-1}]]$ which are `infinite only in the positive direction'---that is, there is some $P$ such that $\nu_p = 0$ for all $p< P$. 
Observe that the Novikov ring contains $\Q G$, and hence multiplication turns it into a $\Q G$-module.

Note that every element of $x  = \sum_p \mu_p t^p \in \Nov$ has a \emph{support}
\[
\supp x = \{ p \in \Z \mid \mu_p \neq 0\}
\]
which is a subset of $\Z$ bounded from below.

In the following, we will identify a matrix $A$ with the linear map $x \mapsto xA$ given by right-multiplication by $A$.

The matrices discussed below are very closely related to what Sikorav calls \emph{trivially invertible matrices}; the relation is given by multiplication with a diagonal matrix whose diagonal terms are of the form $kt^P$ with $k \in K$.

\begin{prop}\label{prop_RowReduction}
Let $A = (a_{ij})$ be an $m \times n$ matrix over \Nov, where $n\geqslant m$. Suppose that for every $i \leqslant m$ we have $k_i \in K$ and $P_i \in \Z$ such that $a_{ii} - k_i t^{P_i}$ and $a_{ij}$ with $j \neq i$ are supported over $\Z \cap [P_i + 1, \infty)$. 
Then the following hold.
\begin{enumerate}[(1)]
\item $A \colon ({\Nov}{})^m \to ({\Nov}{})^{n}$ is injective.
\label{item 1}
\item When $n = m+1$, the map $A \colon ({\Nov}{})^m \to ({\Nov}{})^{m+1}$ followed by a projection of $({\Nov}{})^{m+1}$ onto $({\Nov}{})^{m}$ forgetting the last coordinate is surjective.
\label{item 4}
\item When $A$ lies over $\Q G$, then $A \colon (\Q G)^m \to (\Q G)^n$ is injective.
\label{item 2}
\item When $G$ satisfies the Atiyah conjecture and is torsion free, and $A$ lies over $\Q G$, then $\mathcal D(G) \otimes A \colon \mathcal D(G)^m \to {\cal D}(G)^n$ is injective.
\label{item 3}
\end{enumerate}
\end{prop}
\begin{proof}
Note that it is sufficient to prove each of the statements for the $m\times m$ top submatrix of $A$ (in \eqref{item 4} we then show that $A$ is surjective). We will therefore assume $A$ to be a square matrix.

%
Multiplying each row of $A$ by a unit in $\Q G$ does not affect its injectivity or surjectivity in any of the situations in the proposition. Therefore, for each $i$ we may multiply the $i^{th}$ row by $(k_i t^{P_i})^{-1}$, and hence assume that $k_i = 1$ and $P_i = 0$.

Observe that the matrix $B = A - I$ has every entry supported over a subset of the positive integers. Thus, for every $k \in \N$ the matrix $(-B)^k$ has entries supported over subsets of $\Z \cap [k, \infty)$, and therefore the matrix
\[
C = \sum_{k = 0}^\infty (-B)^k
\]
is a well-defined matrix over \Nov. Moreover, we have $AC = (I + B)C = I$: for every $K$, we have
\[AC = (I+B)\sum_{k=0}^{K-1} (-B)^k + (I+B)\sum_{k=K}^{\infty}(-B)^k = I - (-B)^K + (I+B)\sum_{k=K}^{\infty}(-B)^k\]
so $AC-I$ is supported over $\Z\cap [K,\infty)$ for every $K$, and thus vanishes. It is also true that $CA = I$, for the same reason. We conclude that
\[A \colon (\Nov{})^m \to (\Nov{})^m\]
is an isomorphism, and hence it is in particular injective and surjective, proving \eqref{item 1} and \eqref{item 4}.

\smallskip
When $A$ is defined over $\Q G$, then we may treat it as a linear map $A \colon (\Q G)^m \to (\Q G)^m$. Injectivity follows immediately from \eqref{item 1}. 

\smallskip
To prove \eqref{item 3}, we need to note that there exists a skew-field $\F(G)$ fitting into the following commutative diagram.
\[\begin{tikzcd}
\Q G \ar[hook]{r} \ar[hook]{d}& {\cal D}(G) \ar[hook]{d}\\
\Nov \ar[hook]{r} & \F (G)
\end{tikzcd}\]
The skew-field was constructed in \cite[Lemma 5.5]{FK18}; it is the Malcev--Neumann completion of $\mathcal D(K)[t^{-1},t]$ with respect to the obvious biordering on $\Z = \langle t \rangle$, which is exactly the same as the ring of twisted Laurent series in one variable $t$ with coefficients $\mathcal D(K)$.

The inverse $C$ we constructed lies over \Nov, and hence $\F(G) \otimes C$ (tensored over the Novikov ring) is an inverse of $\F(G)\otimes A$. Hence, $\F(G)\otimes A$ is injective. This implies that the restriction of $\F(G)\otimes A$ to $\mathcal D(G)^m$ is also injective. But this restriction is precisely ${\mathcal D(G)\otimes A}$.
 \end{proof}

The last ingredient in the proof of \cref{thm_MinCondImpliesL2} is a connection between the Novikov rings and the notion of algebraic fibring, as defined in the statement of the theorem.

 \begin{theorem}[Sikorav~\cite{Sikorav1987}]
 \label{Sikorav}
Let $G$ be a finitely generated group. An epimorphism $\phi \colon G \to \Z$ is algebraically fibred if and only if the homology groups $H_1(G; \widehat{\Q G}^\phi)$ and $H_1(G; \widehat{\Q G}^{-\phi})$ both vanish.
 \end{theorem}

\begin{proof}[Proof of Theorem \ref{thm_MinCondImpliesL2}]
By Remark \ref{rmk_ReplaceWithStandard} we may assume, without changing the isomorphism type of $G$, that $((r_1,\ldots, r_m),\phi)$ satisfies the standard minimum condition for some $\phi\colon F_n\to \Z$. By Lemma \ref{lem_FoxDerivsOfMinimumCondition}, the Jacobian matrix
\[{A=(\FoxB{i}{j})}\]
has entries satisfying the conditions of Proposition \ref{prop_RowReduction}.

Note that the first group homology of $G$ is computed by the chain complex
\[
 C_2 \to C_1 \to C_0
\]
of free $\Z G$-modules, where $C_2 = \Z G^m, C_1 = \Z G^{n}, C_0 = \Z G$, and where we understand both differentials: the differential $C_2 \to C_1$ is equal to the matrix $A$; the entries of the differential $C_1 \to C_0$ are of the form $\ttx_i-1$, where $\ttx_i$ ranges over all generators. Since the generators are non-trivial in $G$ by assumption, the entries of this last differential are non-zero.

By \cref{prop_RowReduction}\eqref{item 3}, the differential $A$ is injective, and hence the chain complex above computes all homology groups of $G$.
By \cref{prop_RowReduction}\eqref{item 3}, the differential $A$ is still injective over the Linnell skew-field $\mathcal D(G)$, and hence $b_2^{(2)}(G) = 0$.

\smallskip
Now suppose that $n = m+1$.
To compute the first Novikov homology $H_1(G;\Nov)$, we tensor the above complex over $\Z G$ with $\Nov$. Let $c$ be a $1$-cycle in this tensored complex. \cref{prop_RowReduction}\eqref{item 4} tells us that we may add a $2$-boundary to $c$ and have all of its entries but the last one equal to $0$. But the last entry of the differential $C_1 \to C_0$ is not zero in $\Z G$, and therefore also non-zero in $\Nov$. This implies that the last entry of $c$ is a zero-divisor in $\Nov$, since $c$ is a $1$-cycle. 

We are assuming that $G$ satisfies the Atiyah conjecture, and therefore $\Q G$ has no non-trivial zero-divisors. It is an easy exercise to see that the same then holds for $\Nov$. We conclude that $c$ is a $2$-boundary, and hence $H_1(G;\Nov)=0$.

If $((r_1,\ldots, r_m),\phi)$ satisfies the maximum condition as well, we repeat the argument for $-\phi$, and conclude using \cref{Sikorav} that $\phi$ is algebraically fibred.
\end{proof}

\section{Embeddings of small cancellation groups}\label{sec_EmbeddingSCGroups}

In this section we build injective homomorphisms from high deficiency groups into deficiency 1 groups, which will allow us to pass results about coherence to the high deficiency groups. 
\begin{theorem}\label{thm_SmallCancEmbedding}
Let $((r_1,\ldots, r_m),\phi)$ be a tuple satisfying the standard minimum condition, for $(m\leq n-2)$. Then there exist cyclically reduced words $s_1,\ldots, s_m$ on an alphabet $\tty_1,\ldots, \tty_{m+1}$, and a map $\psi\colon \{\tty_1,\ldots, \tty_{m+1}\}\to\Z$ such that $((s_1,\ldots, s_m),\psi)$ satisfies the standard minimum condition, and an injection
\[\gpp{\ttx_1,\ldots, \ttx_n}{r_1,\ldots, r_m} \hookrightarrow \gpp{\tty_1,\ldots, \tty_{m+1}}{s_1,\ldots, s_m}.\]
If $((r_1,\ldots, r_m),\phi)$ also satisfies the standard maximum condition, the $s_i$ can be chosen to also satisfy the standard maximum condition.

Suppose in addition the $r_i$ satisfy the $C''(1/(6+\epsilon))$ small cancellation condition for some $\epsilon>0$ and that all $r_i$ have length $l$ where $l> 12 + 72/\epsilon$. Then the $s_i$ can be chosen to satisfy $C''(1/6)$. 
\end{theorem}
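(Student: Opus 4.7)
By Remark \ref{rmk_ReplaceWithStandard}, I may assume $((r_1,\ldots,r_m),\phi)$ satisfies the standard minimum condition, so $\phi(\ttx_i)\geqslant 0$ for $i<n$ and $\phi(\ttx_n)<0$. The plan is to construct an explicit substitution homomorphism $\sigma\colon F_n\to F_{m+1}$ sending $\ttx_i\mapsto\tty_i$ for $i\leqslant m$, $\ttx_n\mapsto\tty_{m+1}$, and $\ttx_j\mapsto w_j$ for each $j\in\{m+1,\ldots,n-1\}$, for carefully chosen words $w_j\in F_{m+1}$. Setting $s_i$ to be the cyclic reduction of $\sigma(r_i)$, the desired homomorphism $G\to H$ is the one induced by $\sigma$, and the aim is to verify injectivity together with the stated properties of the $s_i$.

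The first task is the choice of $w_j$. Let $\phi'$ be the grading on $F_{m+1}$ with $\phi'(\tty_i)=\phi(\ttx_i)$ for $i\leqslant m$ and $\phi'(\tty_{m+1})=\phi(\ttx_n)$. The conditions I would enforce are: (i) $\phi'(w_j)=\phi(\ttx_j)$, so that $\phi'(s_i)=0$; (ii) every prefix of $w_j$ has non-negative $\phi'$-value, ensuring no new $\phi'$-minima arise inside substituted blocks; and (iii) the $w_j$ are pairwise distinct, of bounded length, with small self- and cross-overlaps, and sufficiently distinguishable from the surrounding backbone letters $\tty_k$. Constructing such $w_j$ is straightforward using the letters $\tty_1,\ldots,\tty_m$, which carry the same non-negative $\phi'$-weights as the corresponding $\ttx_i$ in the original alphabet; any non-negative target weight $\phi(\ttx_j)$ can be realised as a monotone path, padded as required for distinctness.

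The verifications for the $s_i$ should then be routine. The minimum condition with respect to $\phi'$ will hold because the lower sections of the $r_i$ involve only the untouched backbone letters $\ttx_i^\pm$ and $\ttx_n^\pm$, while by (ii) the substituted $w_j$-blocks cannot introduce lower $\phi'$-values; hence the lower section of each $s_i$ is exactly the image of the lower section of $r_i$. For $C'(1/6)$, any piece in the $s_i$ is either inherited from $r_i$ (length at most $l/(6+\epsilon)$), contained in a single $w_j$-block (length at most $|w_j|$), or straddles a boundary of the two; a piece of the last kind has length at most $l/(6+\epsilon)+O(|w_j|)$. Since $|s_i|\geqslant l$, the required $C'(1/6)$ inequality reduces to a quantitative bound which is precisely what the hypothesis $l>12+72/\epsilon$ secures.

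The main obstacle is injectivity of the induced map $G\to H$. I would approach this by a van Kampen diagram argument: any reduced word $\omega\in F_n$ with $\sigma(\omega)$ trivial in $H$ gives rise to a reduced disc diagram over the $s_i$, and the distinctness and overlap properties of the $w_j$-blocks should force the $w_j$-labelled arcs of the diagram to pair up and cancel, so that the diagram collapses to a disc diagram over the original $r_i$ and witnesses that $\omega$ is already trivial in $G$. Making this unfolding rigorous---that is, ruling out diagrams in which $w_j$-arcs interleave with the backbone in unexpected ways---is where the $C'(1/6)$-ness of the $s_i$ and the careful combinatorial construction of the $w_j$ must be exploited simultaneously; I expect this to be the technical heart of the proof.
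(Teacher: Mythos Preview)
Your approach has a genuine gap at the injectivity step. You propose a substitution $\sigma\colon F_n\to F_{m+1}$ that sends $\ttx_i\mapsto\tty_i$ for $i\leqslant m$, $\ttx_n\mapsto\tty_{m+1}$, and $\ttx_j\mapsto w_j$ for $m<j<n$, with each $w_j$ a word in $\tty_1,\ldots,\tty_{m+1}$. Since $n>m+1$ this $\sigma$ cannot be injective on the free group: writing $w_j=W_j(\tty_1,\ldots,\tty_{m+1})$, the element $\omega=\ttx_j\cdot W_j(\ttx_1,\ldots,\ttx_m,\ttx_n)^{-1}$ is non-trivial in $F_n$ but lies in $\ker\sigma$. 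Your van Kampen argument cannot handle such $\omega$: since $\sigma(\omega)=1$ already in $F_{m+1}$, the reduced disc diagram over the $s_i$ with boundary $\sigma(\omega)$ is empty and tells you nothing about $\omega$ in $G$. For the induced map to be injective you would need $\ker\sigma$ to lie inside the normal closure of the $r_i$, equivalently each $\ttx_j$ ($m<j<n$) to be expressible in $G$ as the specific word $W_j(\ttx_1,\ldots,\ttx_m,\ttx_n)$; there is no mechanism in your construction to arrange this, and for generic small-cancellation relators it simply fails.

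The paper avoids this obstacle by substituting \emph{all} of $\ttx_1,\ldots,\ttx_n$ by long words $w_1,\ldots,w_n$ in $F_{m+1}$, chosen so that the tuple $(w_1,\ldots,w_n)$ itself satisfies $C'(1/12)$. This makes $\sigma$ injective already on $F_n$, and injectivity at the quotient level is then supplied as a black box by the congruence extension property (Lemma~\ref{lem_CongExtProperty}); no bespoke diagram argument is needed. The minimum condition is likewise obtained differently: the paper does not try to preserve the original $\phi$-minimum of $r_i$, but instead engineers the $w_i$ (using long staircases in a letter $\ttz=\tty_{m+1}$ with $\psi(\ttz)=-1$ and $\psi(\tty_i)=0$) so that the global $\psi$-minimum of $s_i$ is forced deep inside a single $w_i$-block at a lone $\tty_i$-edge flanked by $\ttz^{\pm N}$.
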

The tool we use to produce embeddings is the congruence extension property of small cancellation theory. It is implied by stronger theory in \cite{Ol95} concerning general hyperbolic groups; see also \cite[Theorem 3.5]{Sapir09} for a self-contained argument.
\begin{lem}[Congruence extension property]\label{lem_CongExtProperty}
Let $r_1,\ldots, r_m$ be words in an alphabet $\ttx_1,\ldots, \ttx_n$, and let $w_1,\ldots, w_n$ be words in an alphabet $\tty_1,\ldots, \tty_k$ satisfying $C'(1/12)$. Then the map $f\colon \ttx_i\mapsto w_i$ induces an {\em injective} homomorphism 
\[\gpp{\ttx_1,\ldots, \ttx_n}{r_1,\ldots,r_m} \to \gpp{\tty_1,\ldots, \tty_k}{f(r_1),\ldots, f(r_m)}.\]
\end{lem}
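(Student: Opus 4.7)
The plan is to identify $G := \gpp{\ttx_1,\ldots,\ttx_n}{r_1,\ldots,r_m}$ with the subgroup $S$ of $H := \gpp{\tty_1,\ldots,\tty_k}{f(r_1),\ldots,f(r_m)}$ generated by the images of $w_1,\ldots, w_n$. The assignment $\phi \colon \ttx_i \mapsto w_i$ descends to a well-defined homomorphism $G \to H$ because every $f(r_j)$ is already a relator of $H$; the only content of the lemma is injectivity. Suppose then that $u\in F_n$ satisfies $f(u)=1$ in $H$; we must prove $u=1$ in $G$.

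The natural tool is van Kampen diagrams. Fix a reduced diagram $D$ over the presentation of $H$ with boundary label $f(u)$. Every $2$-cell of $D$ has boundary labelled by some $f(r_j)^{\pm 1}$, which by definition is the concatenation of the $w_i^{\pm 1}$ obtained by substituting $w_i$ for $\ttx_i$ in $r_j$. Consequently each $2$-cell boundary inherits a canonical \emph{block decomposition} whose blocks are arcs, each labelled by a single $w_i^{\pm 1}$, and the outer boundary of $D$ carries a similar decomposition read off from $f(u)$.

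The crux of the argument is to promote these cell-wise block decompositions into a compatible, \emph{global} decomposition of the $1$-skeleton of $D$: every interior arc shared by two $2$-cells, and every arc of the outer boundary, should from both sides be a concatenation of whole $w_i$-blocks. If alignment failed along some internal arc, then a proper sub-arc of a $w_i$-block on one face would coincide letter-for-letter with a proper sub-arc of a $w_j$-block on the other; but such a coincidence forces a common cyclic subword of $w_i^{\pm 1}$ and $w_j^{\pm 1}$ (or of $w_i$ with itself) whose length is a definite fraction of $\min\{|w_i|,|w_j|\}$, contradicting $C'(1/12)$. Reducedness of $D$, combined where necessary with small-cancellation folding moves that amalgamate pairs of $2$-cells sharing an overly long arc, disposes of the residual degenerate configurations. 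This alignment claim is the main obstacle of the proof, and is where the full strength of $C'(1/12)$---as opposed to the bare $C'(1/6)$ that would suffice to make $H$ itself well behaved---is consumed: a factor of two of slack is needed so that the block boundaries on opposite sides of an internal arc cannot drift past each other.

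Once global alignment is established, each maximal $w_i$-block in $D$ is collapsed to a single directed edge labelled $\ttx_i$. The resulting cell complex $D'$ is a van Kampen diagram over the presentation of $G$: each of its $2$-cells now has boundary word $r_j^{\pm 1}$, and its outer boundary reads $u$. Therefore $u=1$ in $G$, establishing injectivity of $\phi$ and hence the lemma.
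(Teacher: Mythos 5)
Your overall strategy---reading off the block decomposition of each $2$-cell of a van Kampen diagram over $\gpp{\tty_1,\ldots,\tty_k}{f(r_1),\ldots,f(r_m)}$ and collapsing whole $w_i$-blocks to single $\ttx_i$-edges---is the right spirit, and is essentially how this result is approached in the literature (note that the paper does not prove the lemma at all: it quotes \cite{Ol95} and \cite[Theorem 3.5]{Sapir09}). But the step you yourself flag as the crux, the ``global alignment'' claim, is precisely the hard content of the congruence extension property, and the justification you offer for it does not work. If an internal arc shared by two $2$-cells fails to respect the block structure, all you get is a common subword of $w_i^{\pm1}$ and $w_j^{\pm1}$ whose length is the length of that arc; nothing forces the arc to be long, so there is no conflict with $C'(1/12)$, which only forbids \emph{long} pieces. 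A reduced diagram can perfectly well contain short internal arcs sitting in the middle of blocks with mismatched indices or phases: reducedness only excludes mirror-image pairs of cells, and---importantly---the presentation of the target group is in general \emph{not} small cancellation (for $r_1=\ttx_1\ttx_2$, $r_2=\ttx_2\ttx_1$ the relators $w_1w_2$ and $w_2w_1$ share half their length), so you cannot invoke the standard $C'(1/6)$ structure theory for $D$ wholesale. The sentence ``reducedness, combined where necessary with small-cancellation folding moves, disposes of the residual degenerate configurations'' is exactly where the theorem lives; as written it asserts the conclusion rather than proving it.

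There are also smaller gaps you would need to address even granting alignment: $f(u)$ and the $f(r_j)$ need not be (cyclically) reduced, so boundary blocks are truncated by cancellation between adjacent blocks (controlled only by a further use of $C'(1/12)$), and after collapsing one must check the resulting complex is a genuine diagram over $\gpp{\ttx_1,\ldots,\ttx_n}{r_1,\ldots,r_m}$ with boundary word exactly $u$, so that $u$ lies in the normal closure of the $r_i$ in $F_n$. These are repairable, but the alignment step is not repairable by the local argument you indicate: one needs the genuine analysis of how cells can meet, as in Ol'shanskii's argument or the self-contained proof of \cite[Theorem 3.5]{Sapir09}---or one should simply cite those sources, as the paper does.
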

With the tool in hand, let us return to \cref{thm_SmallCancEmbedding}.
\begin{proof}[Proof of Theorem \ref{thm_SmallCancEmbedding}]
Consider a free group $F_{m+1}$ on an alphabet $\tty_1,\ldots, \tty_{m}, \tty_{m+1}$. For added clarity in the proof we rename $\tty_{m+1} =\ttz$. We now define elements of $F_{m+1}$, dependent on a large integer $N\gg\max(|\phi(\ttx_i)|)$. First we define
\begin{eqnarray*}
w_1 &=&  \begin{aligned}[t] \ttz^N \tty_1 \ttz^{N-1} \tty_1 \ttz^{N-2} \cdots \ttz \tty_1 \ttz^{-1} \cdots \ttz^{1-N}\tty_1 \ttz^{-N} \tty_1 \ttz^{-\phi(\ttx_1)} \tty_1 \cdot \\  \ttz^{-N-1} \tty_1 \cdots \ttz^{-2N} \tty_1 \ttz^{2N} \cdots \ttz^{N+1} \tty_1\end{aligned} \\
w_2 &=&  \begin{aligned}[t] \ttz^N \tty_2 \ttz^{N-1} \tty_2 \ttz^{N-2} \cdots \ttz \tty_2 \ttz^{-1} \cdots \ttz^{1-N}\tty_2 \ttz^{-N} \tty_2 \ttz^{-\phi(\ttx_2)} \tty_2 \cdot \\  \ttz^{-N-1} \tty_2 \cdots \ttz^{-2N} \tty_2 \ttz^{2N} \cdots \ttz^{N+1} \tty_2 \end{aligned}  \\
&\vdots & \\
w_m &=&  \begin{aligned}[t] \ttz^N \tty_m \ttz^{N-1} \tty_m \ttz^{N-2} \cdots \ttz \tty_m \ttz^{-1} \cdots \ttz^{1-N}\tty_m \ttz^{-N} \tty_m \ttz^{-\phi(\ttx_m)} \tty_m \cdot \\  \ttz^{-N-1} \tty_m \cdots \ttz^{-2N} \tty_m \ttz^{2N} \cdots \ttz^{N+1} \tty_m\end{aligned}
\end{eqnarray*}
Next, we define
\begin{eqnarray*}
w_{m+1} &=& \begin{aligned}[t]  \ttz^N \tty_1^2 \ttz^{N-1} \tty_1^2 \ttz^{N-2} \cdots \ttz \tty_1^2 \ttz^{-1} \cdots \ttz^{1-N}\tty_1^2 \ttz^{-N} \tty_1^2 \ttz^{-\phi(\ttx_{m+1})} \tty_1^2 \cdot\hspace{2em} \\ \ttz^{-N-1} \tty_1^2 \cdots \ttz^{-2N} \tty_1^2 \ttz^{2N} \cdots \ttz^{N+1} \tty_1^2 \end{aligned}\\
&\vdots&\\
w_{n-1} &=&  \begin{aligned}[t] \ttz^N \tty_1^{n-m} \ttz^{N-1} \tty_1^{n-m} \ttz^{N-2} \cdots \ttz \tty_1^{n-m} \ttz^{-1} \cdots \ttz^{1-N}\tty_1^{n-m} \ttz^{-N} \tty_1^{n-m}\cdot\\ \ttz^{-\phi(\ttx_{n-1})} \tty_1^{n-m}  \ttz^{-N-1} \tty_1^{n-m} \cdots \ttz^{-2N} \tty_1^{n-m} \ttz^{2N} \cdots \ttz^{N+1} \tty_1^{n-m} \end{aligned}
\end{eqnarray*}
And finally, we define
\begin{eqnarray*}
w_n &= & \begin{aligned}[t] \ttz^{-N} \tty_1^{n-m+1} \ttz^{-N-1} \cdots \ttz^{2-2N}\tty_1^{n-m+1}\ttz^{2N-2}  \cdots  \ttz^N \tty_1^{n-m+1}\ttz^{-\phi(\ttx_n)} \cdot \\ \tty_1^{n-m+1} \ttz^{N-1}  \tty_1^{n-m+1} \ttz^{N-2}  \cdots  \ttz\tty_1^{n-m+1} \ttz^{-1}  \cdots \ttz^{1-N}\tty_1^{n-m+1}\end{aligned}
\end{eqnarray*}
Note that if $u$ is a cyclic subword of some $w_i^{\pm1}$ of length at least $2N+2(n-m)+1$, then $u$ must contain a subword $\ttz^{\pm 1} \tty_j^a \ttz^{\pm' 1}$. The number $j$ together with the sign and magnitude of $a$ diagnose a unique $w_i$ which contains $u$, and the direction in which $w_i$ is read. Note further that by inspection the longest repeated subword within any of $w_i$ is a subword $\ttz^{\pm(2N-2)}\tty_1^{n-m+1} \ttz^{\pm (2N-1)}$ repeated within $w_n$.

Hence repeated subwords among the words $w_i$ have linear length in $N$, while the length of each of the words $w_i$ is a quadratic polynomial in $N$. For any $0<\delta <1/12$ we may therefore choose $N$ large enough so that the words $w_i$ are $C''(\delta)$ and all have length in the range $[N^2(4-\delta), N^2(4+\delta)]$. 
\begin{figure}[ht]
\centering
\begin{tikzpicture}[scale=0.9,>=latex,decoration={
    markings,
    mark=at position 0.6 with {\arrow{>}}}]
\begin{scope}[gray!90]
\begin{scope}[shift={(-7.5,0)}]
\draw[->] (0,-3.5)--(0,4.5) ;
\end{scope}
\draw[dotted] (-6.3,0)--(4,0); 
\draw[dotted] (-4.5,-2)--(4,-2); 
\draw[dotted] (-4.5,-3)--(4,-3); 
\draw[dotted] (-4.5,1.5)--(-2.25,1.5); 
\draw[dotted] (4.5,1)--(2.25,1); 
\end{scope}
\begin{scope}[gray!120]
\draw (-7.5,4.5) node[above]{$\phi,\psi$};
\draw (-6.85,0) node[]{$\phi_{\rm min}$};
\draw (-6,-2) node[]{$\phi_{\rm min}-\frac{N(N-1)}{2}$};
\draw (-6,-3) node[]{$\phi_{\rm min}-\frac{N(N+1)}{2}$};
\end{scope}
\draw[postaction={decorate}] (-4.5,1.5) -- (0,0) node[near end,above]{$\ttx_n$};
\draw[postaction={decorate}] (0,0) -- (4.5,1) node[near start,above]{$\ttx_i$};

\begin{scope}[red]

\draw[postaction={decorate}] (-1.5, -2) -- (-0.75,-2) node[midway, below]{$\tty_1^{n-m+1}$} ;
\draw[postaction={decorate}] (-0.75,-2) .. controls (-0.6,-0.8) and (-0.15,-0.1).. (0,0);
\draw[postaction={decorate}] (-2.25,0) .. controls (-2.1,-0.1) and (-1.65,-0.8) .. (-1.5,-2);
\draw[postaction={decorate}] (-2.25,1.5) -- (-2.25,0) node[near end, left]{$\ttz^{-\phi(\ttx_n)}$};
\draw[postaction={decorate}] (-3.75, 3.5) -- (-3,3.5) node[near start, above]{$\tty_1^{n-m+1}$} ;
\draw[postaction={decorate}] (-3,3.5) .. controls (-2.85,2.3) and (-2.4,1.6).. (-2.25,1.5);
\draw[postaction={decorate}] (-4.5,1.5) .. controls (-4.35,1.6) and (-3.9,2.3) .. (-3.75,3.5);

\draw[dashed, postaction={decorate}] (-4.3, 4.5) -- (-0.2, 4.5) node[midway, above]{$w_n$};
\end{scope}


\begin{scope}[red]

\draw[postaction={decorate}] (0.75, -3) -- (1.5,-3) node[midway, below]{$\tty_i$} ;
\draw[postaction={decorate}] (0,0) .. controls (0.15,-0.1) and (0.65,-0.8) .. (0.75,-3);
\draw[postaction={decorate}] (1.5,-3) .. controls (1.6,-0.8) and (2.1,-0.1) .. (2.25,0);
\draw[postaction={decorate}] (2.25,0) -- (2.25,1) node[near start, right]{$\ttz^{-\phi(\ttx_i)}$};
\draw[postaction={decorate}] (3, 4) -- (3.75,4) node[midway, below]{$\tty_i$} ;
\draw[postaction={decorate}] (2.25,1) .. controls (2.4,1.1) and (2.9,1.8) .. (3,4);
\draw[postaction={decorate}] (3.75,4) .. controls (3.85,1.8) and (4.35,1.1) .. (4.5,1);

\draw[dashed, postaction={decorate}] (0.2, 4.5) -- (4.3, 4.5) node[midway, above]{$w_i$};
\end{scope}
\end{tikzpicture}
\caption{A schematic showing the replacement of $\ttx_i$ with $w_i$ at the minimum point of a relator $r_i$. The `parabolic' segments represent those parts of the words $w_i$ and $w_n$ not otherwise labelled. The vertical scale shows the variation of $\phi$ and $\psi$ over the words. The horizontal scale has no precise meaning.}
\label{fig_SmallCancellation}
\end{figure}
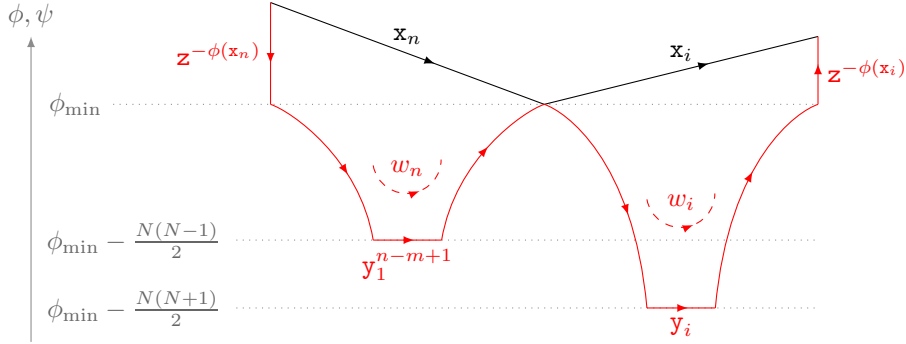

By Lemma \ref{lem_CongExtProperty}, we now have the desired embedding  
\[\begin{tikzcd}\gpp{\ttx_1,\ldots, \ttx_n}{r_1,\ldots,r_m} \ar[hook]{r}& \gpp{\tty_1,\ldots, \tty_k}{f(r_1),\ldots, f(r_m)}.\end{tikzcd}\]

Consider the map $\psi\colon F_{m+1}\to \Z$ given by $\ttz\mapsto -1, \tty_i\mapsto 0$ $(i\leq m)$. By construction we have $\psi(w_i) = \phi(\ttx_i)$ for all $i$. 

The minimum value attained by $\psi$ over initial segments of the word $w_i$, for $i<n$, is $-N(N+1)/2$, attained both by the initial segment ending in $\ttz^N$ and the initial segments ending $\ttz^N\tty_j^k$ (for whichever $j$ and $k$ appear). The minimum value attained along initial segments of $w_n$ is $-N(N-1)/2+\phi(\ttx_n)$, which is $N(N-1)/2$ lower than $\phi(\ttx_n)$.

The respective maximum values of $\psi$ along subwords of $w_i$ and $w_n$ are $N(3N+1)/2 + \phi(\ttx_i)$ and $(N-1)(3N-2)/2$.

Now the relator $r_i$ attains its minimal $\phi$-value $\phi_{\rm min}(i)$ either at a single vertex, whose adjoining edges are labelled $\ttx_i$ and $\ttx_n$ or a single edge labelled by $\ttx_i$ whose adjecent edges are labelled by $\ttx_n$. It follows that when we substitute the words $w_i$ for $\ttx_i$ in the relators $r_i$, the minimal $\psi$-value of the word $f(r_i)$ so obtained is given by \[\psi_{\rm min}(i)=\phi_{\rm min}(i)-N(N+1)/2,\] and this is attained exactly along a single edge labelled by $\tty_i$, which appears in a segment $\ttz^N \tty_i\ttz^{-N}$. See Figure \ref{fig_SmallCancellation}. Note that this minimum property is retained when $f(r_i)$ is cancelled down to a cyclically reduced word $s_i$. Hence $((s_1,\ldots, s_m),\psi)$ satisfies the standard minimum condition. If $((r_1,\ldots, r_m),\phi)$ additionally satisfies the standard maximum condition then $((s_1,\ldots, s_m),\psi)$ similarly satisfies the standard maximum condition.

Suppose the words $r_i$ satisfy the $C''(1/(6+\epsilon))$ small cancellation condition, and let $s_i$ be the word obtained from $f(r_i)$ by substituting the small cancellation words $w_i$ and cyclically reducing as above. Then $|s_i| \geqslant N^2l(4-3\delta)$. Any (cyclic) subword $u$ of length at least $|s_i|/6= N^2l(4-3\delta)/6$ must therefore contain the images in $s_i$ of at least 
\[\frac{N^2l(4-3\delta)}{6\cdot N^2(4+2\delta)} -2\]
letters of $r_i$. As $\delta\to 0$ we have
\[\frac{N^2l(4-3\delta)}{6N^2(4+2\delta)} -2\to \frac{l}{6}-2 > \frac{l}{6+\epsilon}\]
for the values of $l$ as in the theorem statement. For $\delta$ small enough, we find that the $C''(1/(6+\epsilon))$ condition on the $r_i$ implies the $C''(1/6)$ condition on the $s_i$ as desired.
\end{proof}

\section{Probability}\label{sec_Probability}
In this section we will apply methods similar to \cite{DT06} and \cite{SS10} to give probabilistic statements concerning the minimum conditions of Section \ref{sec_MinimumConditions}.

\begin{prop}\label{prop_Deficiency1Probability}
Let $G$ be a random group with $n$ generators and $n-1$ relators $r_1,\ldots r_{n-1}$. Then there exists $\phi\colon G\to \Z$ such that $((r_1,\ldots, r_{n-1}),\phi)$ satisfies both the minimum condition and the maximum condition with positive asymptotic probability.
\end{prop}
\begin{proof}
For a random tuple of cyclically reduced words $r_1,\ldots, r_{n-1}$, with high probability there is a unique map $\phi\colon F_n\twoheadrightarrow \Z$ up to sign such that $\phi(r_i)=0$ for all $i$. Let $\rv{R}'$ be the set of ordered $(n-1)$-tuples of relators of length $l$ such that the group defined by these relators has first Betti number 1. Then
\[|\rv{R}'| \sim |\rv{R}| \sim (2n-1)^{(n-1)l}\]
where $\sim$ denotes asymptotic equivalence
\[f(l)\sim g(l) \quad\Leftrightarrow \quad\frac{f(l)}{g(l)}\to 1 \text{ as }l\to \infty.\]
Define a function
\[\tau\colon \rv{R}' \to \rv[l+8]{R}'\]
in the following manner. Let $(r_1,\ldots, r_{n-1})\in \rv{R}'$ and let $\phi\colon F_n\twoheadrightarrow \Z$ be a map such that $\phi(r_i)=0$ for all $i$. Let $j$ be the first index such that $\phi(\ttx_j)\neq 0$. By changing the sign of $\phi$ we may assume that $\phi(\ttx_j)<0$. Note that these properties determine $\phi$ uniquely. For an index $1\leqslant i\leqslant n-1$ define
\[i'=\begin{cases} i & \text{if } i<j\\ i+1 & \text{if }i\geqslant j.\end{cases}\]
For each relator $r_i$, find the first $\phi$-minimal vertex along $C_{r_i}$, and form a new relator $r_i'$ by inserting a commutator $\ttx_j\ttx_{i'}^\epsilon\ttx_j^{-1}\ttx_{i'}^{-\epsilon}$. Here $\epsilon\in\{\pm1\}$ is chosen so that $\phi(\ttx_{i'}^\epsilon)\leqslant 0$; if $\phi(\ttx_{i'})= 0$ then choose $\epsilon = +1$ unless this causes $r'_i$ to not be a cyclically reduced word, in which case choose $\epsilon =-1$. Note that in no other circumstance can $r_i'$ not be cyclically reduced, for a failure of cyclic reduction would imply that we did not insert the commutator at a $\phi$-minimal point.

Now find the first $\phi$-maximal vertex along $C_{r'_i}$, and form a new relator $r_i''$ by inserting a commutator $\ttx_j^{-1}\ttx_{i'}^{-\epsilon}\ttx_j\ttx_{i'}^{\epsilon}$, where $i'$ and $\epsilon$ are chosen in the same manner as in the previous paragraph.

Note that insertion of commutators does not affect the abelianization of a group, and so the tuple $(r_1'',\ldots, r_{n-1}'')\in\rv[l+8]{R}'$ determines the same map $\phi$ as before.

The lower section $L_\phi(r_i'')$ is now, by construction, either a single vertex following $\ttx_{i'}^\epsilon$ or a single edge labelled by $\ttx_{i'}^\epsilon$. After relabelling so that $\ttx_j$ becomes $\ttx_n$, the pair $((r_1'',\ldots, r_{n-1}''),\phi)$ satisfies the minimum condition. Similarly it satisfies a maximum condition.

Note also that the map
\[\tau\colon (r_1,\ldots, r_{n-1}) \mapsto (r_1'',\ldots, r_{n-1}'')\]
is  injective: if $(r_1'',\ldots, r_{n-1}'')$ lies in the image of $\tau$, then taking the unique map $\phi$ as specified above we may find the unique $\phi$-maximal vertex or edge of $r_i''$ and remove a suitable commutator to recover $r_i'$, then also remove a commutator at the unique $\phi$-minimal vertex or edge of $r_i'$ to recover $r_i$.

Finally let \rv{S} denote the set of tuples in $\rv{R}'$ satisfying both a minimum condition and a maximum condition, so that $\rv[l+8]{S}\supseteq \tau(\rv{R}')$. We have
\[\frac{|\rv[l+8]{S}|}{|\rv[l+8]{R}|} \geqslant \frac{|\rv{R}'|}{|\rv[l+8]{R}|} \sim \frac{|\rv{R}|}{|\rv[l+8]{R}|} \sim \frac{1}{(2n-1)^{8(n-1)}} > 0\]
which concludes the proof.
\end{proof}
\begin{proof}[Proof of Theorem \ref{thm_deficiency1}]
By \cref{thm_C1/6consequences,prop_Deficiency1Probability}, a deficiency 1 group presentation is $C''(1/6)$ and satisfies both the minimum and the maximum conditions for some $\phi$ with positive probability.  By Theorem \ref{thm_MinCondImpliesL2}\footnote{The condition in Theorem \ref{thm_MinCondImpliesL2} that no $\ttx_i$ vanishes in $G$ is a consequence of $C''(1/6)$ once the relators have length at least 3, by Greendlinger's Lemma.} we know that $\phi$ is algebraically fibred. 
Finally, Theorem \ref{thm_ConsequenceOfL2} shows that $G$ is free-by-cyclic and coherent.
\end{proof}
This method of adding commutators to produce the minimum condition will also stand us in good stead in the case of presentations of deficiency at least two. Loosely speaking, at this deficiency there will be many essentially different possible maps to \Z, and for each of these we can add commutators to produce tuples satisfying a minimum condition.

With high probability, none of the generators $\ttx_i$ is killed by the map to the abelianization $H_1(G,\Q)$ of a random group $G$ of deficiency at least 2, whence the set of maps $\phi\colon F_n\to \Z$ with $\phi(r_i)=0$ for all $i$ and with some $\phi(\ttx_i)=0$ comprises only a finite union of proper subspaces of  $H^1(G,\Q)$. We shall assume this holds henceforth. This is a point of only minor significance, but will allow us, by peturbing maps slightly, to only consider maps not killing any $\ttx_i$---so that lower sections of relators will comprise a union of $\phi$-minimal vertices, and we will not need to worry about edges.

Let the set of {\em valid slopes} be
\[S = S(r_1,\ldots, r_m) = \{\phi\colon F_n\to \Z \mid \phi(r_i)=0\,\forall i,\; \phi(\ttx_i)\neq 0\,\forall i\}.\]
Define an equivalence relation $\sim$ on by
\[\phi\sim \phi' \quad\Leftrightarrow \quad L_\phi(r_i)=L_{\phi'}(r_i) \,\forall i.\]
Observe that the equivalence classes of $\sim$ are open subsets of $H^1(F_n,\Q)$, with the property that if $\phi\sim\phi'$ then $((r_1,\ldots, r_m),\phi)$ satisfies the minimum condition if and only if $((r_1,\ldots, r_m),\phi')$ does.

Our earlier heuristic statement concerning `many essentially different possible maps to \Z' has the following more precise formulation.
\begin{lem}\label{lem_ManyCorners}
Suppose the number of relators $m$ is strictly less than $n-1$. For every $K\in \N$ and each $\epsilon >0$,
\[\PP(|S/{\sim}| < K) <\epsilon\]
for all $l$ sufficiently large.
\end{lem}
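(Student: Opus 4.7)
The plan is to bound $|S/{\sim}|$ from below by the number of extreme points of the convex hull of a planar projection of the prefix walk of $r_1$, and then to invoke classical results on planar random walk bridges to show this count tends to infinity in probability.

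First I would set up the geometry. With high probability the abelianisations $\overline{r_1},\ldots,\overline{r_m}\in\Z^n$ are linearly independent, so $V=\{\phi\in\Q^n : \phi(\overline{r_i})=0 \text{ for all } i\}$ has dimension $n-m\geq 2$ and (by the assumption stated just before the lemma) generically avoids each coordinate hyperplane, whence $S$ is, up to scaling, an open dense subset of $V$. Two elements of $S$ with distinct lower sections $L_\phi(r_1)$ lie automatically in distinct $\sim$-classes, so $|S/{\sim}|$ is bounded below by the number of distinct values of $L_\phi(r_1)$ for generic $\phi\in V$. Setting $W = \Q^n/\langle \overline{r_i}\rangle_i$ with canonical projection $\pi\colon\Q^n\to W$ (so $V\cong W^*$), these values correspond bijectively to the vertices of $C = \operatorname{conv}\{\pi(\overline{u_j})\}\subset W$, where $u_0,\ldots,u_l$ are the prefixes of $r_1$.

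Next I would reduce to two dimensions: choose any surjection $\sigma\colon W\twoheadrightarrow\Q^2$, possible since $\dim W\geq 2$, and observe that every vertex of $\sigma(C)\subset\Q^2$ is the image of some vertex of $C$ (witnessed by pulling a supporting functional back from $(\Q^2)^*$ to $V=W^*$). Hence
\[
|S/{\sim}|\;\geq\;\#\{\text{vertices of the convex hull of }((\sigma\circ\pi)(\overline{u_j}))_{j=0}^{l}\text{ in }\Q^2\}.
\]
The planar trajectory on the right-hand side is a bridge from $0$ to $\sigma(\pi(\overline{r_1}))=0$, whose step distribution is the push-forward under $\sigma\circ\pi$ of the letter distribution of a cyclically reduced random word; with high probability the $n$ images $\sigma(\pi(\ttx_k))$ span $\Q^2$, so the walk is genuinely two-dimensional.

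Finally, I would invoke the classical fact that for a non-degenerate planar random walk bridge of length $l$, the number of extreme points of the convex hull has expectation of order $\log l$ and in particular tends to infinity in probability as $l\to\infty$—a consequence of the Donsker-type Brownian bridge approximation used throughout \cite{SS10} together with the fact that a non-degenerate planar Brownian bridge has almost surely infinitely many extreme points on its convex hull. Consequently, for each fixed $K$ and $\epsilon$, the right-hand side above exceeds $K$ with probability at least $1-\epsilon$ for all sufficiently large $l$. The hard part will be that $\pi$ and $\sigma$ themselves depend on the random relators; this is finessed by first conditioning on $\overline{r_1},\ldots,\overline{r_m}$ being linearly independent (a high-probability event), which fixes $\pi$ deterministically and permits a deterministic choice of $\sigma$, after which the conditional law of $r_1$ is uniform on cyclically reduced words with the prescribed abelianisation—a class for which the planar bridge approximation still applies.
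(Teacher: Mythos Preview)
Your geometric reduction---bounding $|S/{\sim}|$ from below by the number of extreme points of the convex hull of the projected prefix walk of $r_1$---is exactly what the paper does, and the reduction to a two-dimensional projection is a harmless extra step (the paper stays in the full $(n-m)$-dimensional quotient and cites \cite[Lemma~3.4]{SS10} directly). So the strategy is right.

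The gap is in your final ``finesse''. You propose to condition on the specific values $\overline{r_1},\ldots,\overline{r_m}$, which indeed fixes $\pi$ and $\sigma$, and then assert that the conditional law of $r_1$ (uniform on cyclically reduced words of length $l$ with a prescribed abelianisation) still satisfies a Donsker-type approximation by a Brownian bridge. That last claim is doing real work that you have not supplied: you would need a \emph{conditional} invariance principle for the non-backtracking walk given its endpoint, holding uniformly over the $O(\sqrt{l})$-scale endpoints that arise with high probability, and this is neither in \cite{SS10} nor a one-line consequence of Donsker. Moreover, your projection $\pi$ quotients out by $\overline{r_1}$ itself, so the map you apply to the walk depends on the very endpoint you are conditioning on, which makes any uniformity statement genuinely delicate.

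The paper sidesteps this entirely by \emph{not} conditioning on $\overline{r_1}$. It projects only modulo the span of $\overline{r_2},\ldots,\overline{r_m}$ (an $(m-1)$-plane, independent of $r_1$), landing in an $(n-m+1)$-dimensional space where the rescaled walk of $r_1$ is still an \emph{unconditioned} process converging to Brownian motion $Y$. The remaining constraint $\phi(r_1)=0$ is then handled geometrically, by looking at the projection of the path orthogonal to its own endpoint $Y(1)$; the fact that this projection almost surely has infinitely many extreme points is precisely \cite[Lemma~3.4]{SS10}. Portmanteau then finishes the job. If you reorganise your argument to quotient only by $\overline{r_2},\ldots,\overline{r_m}$ and treat the $\phi(r_1)=0$ constraint via the orthogonal-to-endpoint projection rather than via conditioning, you recover the paper's proof and the gap disappears.
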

\begin{proof}
Let $\cC_n$ be the space of continuous functions $f\colon [0,1]\to \R^n$ such that $f(0)=0$, equipped with the supremum norm. Let $A_{n,K}$ be the set of functions $f\in \cC_n$ such that the convex hull $\Delta(f)$ of the projection of $f$ to the hyperplane orthogonal to $f(1)$ is an $(n-1)$-dimensional polytope with at most $K-1$ vertices. Note that $f(1)\neq 0$ with Wiener measure 1, so we may ignore the measure zero set where $f(1) = 0$.

Let $T_0,T_1,\ldots, T_l$ be the non-backtracking random walk in $\Z^n$ corresponding to the cyclically reduced relator $r_1$ of length $l$. Let $X_l(t)\colon [0,1]\to \R^n$ be the piecewise affine function defined by $X_l(i/l) = T_i / \sqrt{l}$. By \cite[Theorem 3.3]{SS10}, $X_l(t)$ converges in distribution to the standard Brownian motion $Y(t)$ on $\R^n$.

The other $m-1$ relators $r_2,\ldots, r_m$ span with high probability an $(m-1)$-dimensional subspace $W_l(r_2,\ldots, r_m)$ of $\R^n$. We consider this to be a random variable $W_l$ with values in the Grassmannian \Gr\ of $(m-1)$-planes in $\R^n$. Again by \cite[Theorem 3.3]{SS10} the relators $r_i$ converge to independent Brownian motions, which in particular are spherically symmetric; it follows that $W_l$ converges in distribution to the uniform distribution $U$ on \Gr. Since $X_l$ and $W_l$ are independent, the pair $(X_l,W_l)$ converges in distribution to $(Y,U)$ on $\cC_n\times \Gr$.

For $(f,V)\in \cC_n\times\Gr$, we may consider the orthogonal projection $\pi_V(f) \in \cC_{n-m+1}$ of $f$ onto the plane\footnote{Strictly speaking, $\pi_V(f)$ is a function with values in $V^\perp$, not in $\R^{n-m+1}$, unless we choose an orthonormal basis for $V^\perp$. We will ignore this unimportant technicality for the sake of readability; it could be resolved by working on charts in \Gr\ on which a continuous choise of basis of $V^\perp$ may be made.} $V^{\perp}$ orthogonal to $V$, and ask whether $\pi_V(f)\in A_{n-m+1,K}$. We desire to show that $\pi_{W_l}(X_l)\notin A_{n-m+1,K}$ with high probability: for then there are at least $K$ distinct maps $\tilde\phi\colon\R^{n-m+1}\to \R$ whose sets of $\tilde\phi$-minimal vertices of $\Delta(\pi_{W_l}(X_l))$ are distinct. Composing these with the projections $\R^n\to \R^{n-m+1}$ gives $K$ maps $\R^n\to\R$ which, after perturbing them to be defined over \Q, witness the desired inequality $|S/{\sim}|\geqslant K$.

Since $n-m+1\geqslant 2$, by \cite[Lemma 3.4]{SS10} we know that $A_{n-m+1,K}$ is closed and has Wiener measure zero. Since the mapping $(f,V)\mapsto \pi_V(f)$ is continuous, the set $B=\{(f,V)\mid \pi_V(f)\in A_{n-m+1,K}\}$ is also closed. For each fixed $V\in \Gr$, we have
\begin{eqnarray*}
\mu_{\rm Wiener}\left(\pi_V^{-1}(A_{n-m+1,K})\right) &=& \PP\left(Y\in \pi_V^{-1}(A_{n-m+1,K})\right)\\
&=& \PP\big(\pi_V(Y)\in A_{n-m+1,K}\big)\\
&=& \mu_{\rm Wiener}\left(A_{n-m+1,K}\right) = 0\end{eqnarray*}
since the orthogonal projection $\pi_V(Y)$ of the standard Brownian motion $Y$ is again a standard Brownian motion, and hence is also governed by the Wiener measure.

Letting $\nu$ denote the product measure $\mu_{\rm Wiener}\times \mu_{\rm Gr}$, we deduce
\[\nu(B)= \iint \chi_{B} d\mu_{\rm Wiener} d\mu_{\rm Gr} = \int \mu_{\rm Wiener}\left(\pi_V^{-1}(A_{n-m+1,K})\right) dV = 0.\]
Finally, by the convergence in distribution and Portmanteau's Lemma, we find
\begin{eqnarray*}\limsup_{l\to\infty} \PP\left(\pi_{W_l}(X_l)\in A_{n-m+1,K}\right) &=& \limsup_{l\to\infty} \PP\left((X_l,W_l)\in B\right)\\ &\leqslant & \PP((Y,U)\in B))= 0 \end{eqnarray*}
as $l\to\infty$ as required.
\end{proof}
\begin{prop}\label{prop_HighDeficiencyProbability}
A random group with $n\geqslant 3$ generators and $m < n-1$ relators satisfies a minimum condition with high probability.
\end{prop}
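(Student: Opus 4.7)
The plan is to build on the commutator-insertion construction of Proposition~\ref{prop_Deficiency1Probability} together with Lemma~\ref{lem_ManyCorners}, which in this higher-deficiency regime produces many equivalence classes of valid slopes. Intuitively, each slope class provides an essentially independent ``attempt'' at the minimum condition, and having many attempts drives the failure probability to zero.

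\emph{Step 1 (many slopes).} Fix $\epsilon > 0$ and an integer $K$ to be chosen later. By Lemma~\ref{lem_ManyCorners}, for all $l$ sufficiently large we have $\PP(|S/{\sim}| \geqslant K) \geqslant 1 - \epsilon/2$, and we condition on this event for the rest of the argument.

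\emph{Step 2 (per-slope probability).} For each slope class $[\phi] \in S/{\sim}$, consider the variant of the injection $\tau$ from the proof of Proposition~\ref{prop_Deficiency1Probability} associated to $\phi$: it inserts a commutator of the form $\ttx_j \ttx_{i'}^\epsilon \ttx_j^{-1} \ttx_{i'}^{-\epsilon}$ at the first $\phi$-minimal vertex of each $r_i$. The counting argument used there---comparing $\rv{R}$ to $\rv[l+4]{R}$---shows that among tuples having $[\phi]$ as a valid slope class, a positive fraction already satisfy the minimum condition for $\phi$, with density bounded below by some constant $c = c(n,m) > 0$ independent of the particular slope class.

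\emph{Step 3 (combining across slopes).} For distinct slope classes $[\phi_1],\ldots,[\phi_K]$, the $\phi_k$-minima of each $r_i$ lie at genuinely different positions along the cyclic word (by the definition of $\sim$, since differing lower sections is what distinguishes equivalence classes). Passing to the Brownian motion scaling limit, as in the proof of Lemma~\ref{lem_ManyCorners} and \cite[Theorem~3.3]{SS10}, the labels of edges adjacent to $K$ different-slope minima of a single relator become asymptotically independent. Combined with independence across the $m$ relators, this yields
\[ \PP(\text{no slope class satisfies a minimum condition}) \leqslant (1-c)^K + o(1) \quad \text{as } l \to \infty. \]

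\emph{Step 4 (conclusion).} Choose $K$ with $(1-c)^K < \epsilon/4$, then take $l$ large enough that the $o(1)$ error and the bound of Step~1 together contribute at most $3\epsilon/4$. This yields a minimum condition with probability at least $1-\epsilon$, as required.

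The hard part is Step~3: the events ``minimum condition fails at $[\phi_k]$'' genuinely share the same underlying random tuple and are therefore correlated. Establishing asymptotic independence requires a joint analysis of random-walk minima along $K$ different projections simultaneously, extending the one-projection analysis of Lemma~\ref{lem_ManyCorners} to a joint convergence statement and invoking standard facts about the local structure of Brownian minima to extract the required quantitative decorrelation of the adjacent-edge labels.
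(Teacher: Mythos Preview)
Your Step~3 contains a genuine gap that the sketch does not close. The Brownian-motion limit of \cite[Theorem~3.3]{SS10} sees the walk only at scale $\sqrt{l}$: it tells you where, macroscopically, the various $\phi$-minima lie, but it carries no information about the \emph{labels} of the individual edges adjacent to those minima, which live at scale~$1$. So ``standard facts about Brownian minima'' cannot deliver decorrelation of adjacent-edge labels. Worse, the failure events at $[\phi_1],\ldots,[\phi_K]$ share the same word and can be strongly positively correlated: a relator with a long plateau near its $\phi_1$-minimum will tend to have many $\phi_k$-minimal vertices for nearby slopes as well, and the minimum condition fails for all of them simultaneously. You concede that Step~3 is the hard part; as written it is unproved, and there is no reason to expect an inequality of the shape $(1-c)^K$ to hold.

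The paper sidesteps independence entirely by a counting argument restricted to the \emph{bad set} $\rv{B}$ of tuples satisfying a minimum condition for no slope whatsoever. With ${\cal U}_{l,K}=\rv{B}\cap\{|S/{\sim}|\geqslant K\}$, one defines $K$ commutator-insertion maps $\tau_1,\ldots,\tau_K\colon {\cal U}_{l,K}\to \rv[l+4]{R}'$ as in your Step~2, one per slope class. The crucial extra observation is that these maps are not merely individually injective but have \emph{pairwise disjoint images}. Indeed, if $(r_1,\ldots,r_m)\in\rv{B}$ then no $r_i$ has a unique $\psi$-minimum for any $\psi\in S$; hence any unique $\psi$-minimum of $\tau_j(r_i)$ must sit at one of the three new vertices created by the inserted commutator, and only the central one is flanked on both sides by the correct commutator pattern. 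Thus from any element of $\bigcup_j\tau_j({\cal U}_{l,K})$ one can locate the inserted commutator and recover the original tuple uniquely. This yields $|\rv[l+4]{R}'|\geqslant K\,|{\cal U}_{l,K}|$, so $|{\cal U}_{l,K}|/|\rv{R}'|\leqslant (2n)^{4m}/K$, and Lemma~\ref{lem_ManyCorners} handles $\rv{B}\smallsetminus{\cal U}_{l,K}$. The role your independence step was meant to play is taken over by this purely combinatorial disjointness, which works precisely because the domain of the $\tau_j$ is the bad set.
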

\begin{proof}
Let $\rv{R}'$ be the set of $m$-tuples of cyclically reduced relators $r_1,\ldots, r_m$ of length $l$ such that no $\ttx_i$ vanishes in the abelianisation of the group
\[G=\gpp{\ttx_1,\ldots, \ttx_n}{r_1,\ldots,r_m} .\]
As remarked previously, $|\rv{R}'|\sim |\rv{R}|$ so that a relator lies in $\rv{R}'$ with high probability. Let $\rv{B}$ be the set of tuples $(r_1,\ldots, r_m)\in \rv{R}'$ such that $((r_1,\ldots, r_m),\phi)$ does not satisfy the minimum condition for any valid slope $\phi\in S(r_1,\ldots, r_m)$. Let $K>0$ and let ${\cal U}_{l,K}$ be the set of tuples in \rv{B} such that $|S/{\sim}|\geqslant K$.

We construct $K$ injections $\tau_1,\ldots, \tau_K\colon {\cal U}_{l,K}\to \rv[l+4]{R}'$ with disjoint images in the following way. Fix for all time some arbitrary well-ordering on $H^1(F_n,\Z)$.

For $(r_1,\ldots, r_m)\in {\cal U}_{l,K}$ let $\phi_1,\ldots, \phi_K$ be the first $K$ elements of $S$ representing different $\sim$-classes. For each $i$ and $j$ find the first $\phi_j$-minimal vertex of $r_i$ and insert at this point the commutator
\[\ttx_n^{-{\rm sign}(\phi_j(\ttx_n))}\ttx_i^{-{\rm sign}(\phi_j(\ttx_i))}\ttx_n^{{\rm sign}(\phi_j(\ttx_n))}\ttx_i^{{\rm sign}(\phi_j(\ttx_i))}\]
to form a new word $\tau_j(r_i)$. Note that the $\phi_j$-minimality of the insertion point implies that $\tau_j(r_i)$ is cyclically reduced. Insertion of commutators does not affect the abelianization, so the set $S$ is unchanged (though the equivalence relation $\sim$ will be altered).

We claim that the maps
\[\tau_j\colon {\cal U}_{l,K}\to \rv[l+4]{R}',\quad (r_1,\ldots,r_m)\mapsto (\tau_j(r_1),\ldots, \tau_j(r_m))\]
are injective and have disjoint images, whence $|\rv[l+4]{R}'|\geqslant K |{\cal U}_{l,K}|$. Observe that the relator $\tau_j(r_i)$ posesses a unique $\phi_j$-minimal vertex at the midpoint of the added commutator. Furthermore, if $\tau_j(r_i)$ has a unique $\psi$-minimal vertex $v$ for any $\psi\in S$, then since $r_i\in \rv{B}$ the vertex $v$ must be one of the three `new' vertices created by the addition of the commutator---and only one of these has the property that the four closest edges form a commutator. Hence there is a unique way to recover the relator $r_i$ from $\tau_j(r_i)$.

Finally take arbitrary $K\in\N$ and $\epsilon>0$. For $l$ sufficiently large (depending on $K$ and $\epsilon$), Lemma \ref{lem_ManyCorners} applies and we may estimate:
\[
\frac{|\rv{B}|}{|\rv{R}'|} = \frac{|{\cal U}_{l,K}|}{|\rv{R}'|} + \frac{|\rv{B}\smallsetminus {\cal U}_{l,K}|}{|\rv{R}'|}
 \leqslant  \frac{|\rv[l+4]{R}'|}{K|\rv{R}'|} + \epsilon \leqslant \frac{1}{K} (2n)^{4m} + \epsilon.
\]
Hence the probability that a tuple lies in \rv{B} becomes less than any prescribed positive value. It follows that with high probability a tuple of relators lies in the complement of \rv{B}---and thus satisfies a minimum condition.
\end{proof}

It only remains to assemble our various ingredients into the headline theorem.
\begin{proof}[Proof of Theorem \ref{thm_deficiencymore}]
By Proposition \ref{prop_HighDeficiencyProbability}, the random group satisfies a minimum condition. By Theorem \ref{thm_SmallCancWHP}, it is $C''(1/7)$ with high probability, whence by Theorem \ref{thm_C1/6consequences} it is two-dimensional and satisfies the Atiyah conjecture. Theorem \ref{thm_MinCondImpliesL2} now implies that $b^{(2)}_2(G)=0$; the value $b^{(2)}_1(G)=d$ follows from the Euler characteristic.

Furthermore, by Theorem \ref{thm_SmallCancEmbedding} the group $G$ embeds into a $C''(1/6)$, deficiency 1 group $H$ satisfying a minimum condition. By Theorems \ref{thm_ConsequenceOfL2} and Theorem \ref{thm_MinCondImpliesL2} we find that $H$ is virtually free-by-cyclic. Coherence of $G$ is inherited from $H$.
\end{proof}
\bibliographystyle{alpha}
\bibliography{Random_L2.bib}
\end{document}